\def\thm@space@setup{%
  \thm@preskip=0.5em\thm@postskip=\thm@preskip%
}
\newtheoremstyle{named}{}{}{\\itshape}{}{\bfseries}{.}{.5em}{\thmnote{#3's }#1}
\theoremstyle{named}
\theoremstyle{plain}
\newtheorem{thm}{Theorem}[section]
\newtheorem{prop}[thm]{Proposition}
\newtheorem{lem}[thm]{Lemma}
\newtheorem{cor}[thm]{Corollary}
\theoremstyle{definition}
\newtheorem{defn}[thm]{Definition}
\theoremstyle{remark}
\newtheorem{rmk}[thm]{Remark}
\newcommand{\Hom}{\mathrm{Hom}}
\newcommand{\fg}{\mathfrak{g}}
\newcommand{\A}{\mathbb{A}}
\newcommand{\CC}{\mathbb{C}}
\newcommand{\QQ}{\mathbb{Q}}
\newcommand{\ZZ}{\mathbb{Z}}
\newcommand{\GL}{\mathrm{GL}}
\newcommand{\Sp}{\mathrm{Sp}}
\newcommand{\cO}{\mathcal{O}}
\newcommand{\mf}{\mathfrak}
\newcommand{\mc}[1]{\mathcal{#1}}
\newcommand{\mr}[1]{\mathrm{#1}}
\newcommand{\ol}[1]{\overline{#1}}
\newcommand{\ul}[1]{\underline{#1}}
\newcommand{\Ad}{\mathrm{Ad}}
\newcommand{\ad}{\mathrm{ad}}
\newcommand{\et}{\text{\'et}}
\newcommand{\Spec}{\mathrm{Spec}}
\newcommand{\Loc}{\mathrm{Loc}}
\newcommand{\Rep}{\mathrm{Rep}}
\renewcommand{\top}{\mathrm{top}}
\DeclareMathOperator{\Pross}{Pro-ss}
\DeclareMathOperator{\im}{im}
\DeclareMathOperator{\LS}{LS}
\newcommand{\tLS}{\widetilde{\LS}}
\newcommand{\git}{{\,\!\sslash\!\,}}
\newcommand{\spc}[1]{{\color{blue} \textsf{$\blacktriangle\blacktriangle\blacktriangle$ Comment: [#1]}}}
\title{G-rigid local systems are integral}
\author[C.~Klevdal]{Christian Klevdal}
\address{Department of Mathematics, The University of Utah, 155 S 1400 E, Salt Lake City, UT 84112, USA}
\email{klevdal@math.utah.edu}
\author[S.~Patrikis]{Stefan Patrikis}
\address{Department of Mathematics, The Ohio State University, 100 Math Tower, 231 West 18th Avenue, Columbus, OH 43210, USA}
\email{patrikis.1@osu.edu}
\date{September 2020}
\thanks{We thank Domingo Toledo for helpful conversations about lattice rigidity theorems, and we thank H\'{e}l\`{e}ne Esnault and Michael Groechenig for their comments. C.K. was supported by NSF grant DMS-1840190. S.P. was supported by NSF grants DMS-1700759 and DMS-1752313.}
\begin{document}

\begin{abstract}
    Let $G$ be a reductive group, and let $X$ be a smooth quasi-projective complex variety. We prove that any $G$-irreducible, $G$-cohomologically rigid local system on $X$ with finite order abelianization and quasi-unipotent local monodromies is integral. This generalizes work of Esnault and Groechenig when $G= \mathrm{GL}_n$, and it answers positively a conjecture of Simpson for $G$-cohomologically rigid local systems. Along the way we show that the connected component of the Zariski-closure of the monodromy group of any such local system is semisimple.
\end{abstract}

\maketitle

\section{Introduction}
A central question of arithmetic geometry is to identify which Galois representations arise, via \'{e}tale cohomology, from algebraic geometry. The Fontaine-Mazur conjecture (\cite{fontaine-mazur}), asserting that all potentially semistable and almost everywhere unramified semisimple $\ell$-adic representations of the absolute Galois group of a number field $F$ do indeed appear in the cohomology of smooth projective varieties over $F$, is the prototypical and most famous explicit problem in this area. Carlos Simpson formulated in \cite[pg. 9 Conjecture]{simpson:higgs} an analogous conjecture that \textit{rigid} complex representations of the topological fundamental group of a smooth projective complex variety $X$ necessarily appear in the cohomology of a family of varieties over $X$. In particular (\textit{loc. cit.}), rigid representations should be defined over the ring of integers in some number field. Simpson raised these questions for representations valued not just in $\mr{GL}_n$, but in general algebraic groups $G$.

When $G= \mr{GL}_n$, two striking general results are known. For $X$ an open subvariety of $\mathbb{P}^1$, Katz has proven that any $\mr{GL}_n$-cohomologically rigid (see Definition \ref{basicdefs}) irreducible local system on $X$ with quasi-unipotent local monodromies is a subquotient of the monodromy representation of a family of varieties over $X$ (\cite[Theorem 8.4]{katz:rls}). Esnault and Groechenig (\cite[Theorem 1.1]{esnault-groechenig:rigid}; see also \cite{esnault-groechenig:rigidconnection}) have proven Simpson's integrality  conjecture, for all smooth quasi-projective varieties, for $\mr{GL}_n$-cohomologically rigid irreducible representations with finite-order determinant and quasi-unipotent local monodromies. 

In the present paper we generalize the main theorem of \cite{esnault-groechenig:rigid} to rigid representations valued in general connected reductive groups $G$. We begin by making precise the basic terms.
\begin{defn}\label{basicdefs}
Let $X$ be a connected smooth quasi-projective variety over $\CC$, and let $j \colon X \hookrightarrow \ol{X}$ be a good compactification: thus $\ol{X}$ is smooth projective, and the boundary $D = \ol{X} \setminus X$ is a strict normal crossings divisor. Let $x \in X(\CC)$ be any base-point, and let $\pi_1^{\mr{top}}(X, x)$ be the corresponding topological fundamental group. A homomorphism $\rho \colon \pi_1^{\mr{top}}(X, x) \to G(\CC)$ 
\begin{itemize}
\item is $G$-irreducible if the image of $\rho$ is not contained in any proper parabolic subgroup of $G$.\footnote{Replacing $G(\CC)$ in this definition by $G(K)$ for some field $K$, we will tend to abuse notation and say a representation is $G$-irreducible if it is $G$-absolutely irreducible, i.e. the resulting homomorphism into $G(\ol{K})$ does not factor through a proper parabolic subgroup.}
\item is $G$-cohomologically rigid if $H^1(\ol{X}, j_{!*}\ul{\fg^{\mr{der}}})=0$, where $\fg^{\mr{der}}$ is the Lie algebra of the derived group of $G$, regarded as a local system on $X$ via the composite $\Ad \circ \rho$.
\item has quasi-unipotent local monodromy if for all points $y$ in the smooth locus of $D$ and any sufficiently small ball $\Delta \subset \ol{X}$ around $y$, $\rho(\gamma)$ is quasi-unipotent for a generator $\gamma$ of $\pi_1^\top(\Delta\setminus D \cap \Delta) \cong \ZZ$. 
\end{itemize}
\end{defn}
Equivalently, $\rho$ is cohomologically rigid if it represents a smooth isolated point on an appropriate moduli space of $G$-local systems; see Proposition \ref{tangent space} and Remark \ref{intermediate extension}. Note that if $\rho$ is either $G$-irreducible or $G$-cohomologically rigid, there need not exist a faithful finite-dimensional representation $r \colon G \to \mr{GL}_n$ of $G$ such that $r \circ \rho$ is either $\mr{GL}_n$-irreducible or $\mr{GL}_n$-cohomologically rigid; thus the results of \cite{esnault-groechenig:rigid} cannot be used to bootstrap to the case of general $G$.

%\begin{defn}\label{basicdefs}
%Let $\Gamma$ be a group, $G$ a connected reductive group over a field $F$. A representation $\rho %\colon \Gamma \to G(F)$ is
%\begin{itemize}
%    \item $G$-irreducible if $\rho(\Gamma)$ is not contained in any proper parabolic subgroup of %$G(F)$. 
%    \item $G$-cohomologically rigid if $H^1(\Gamma, \fg^\ad) = 0$, where $\fg^\ad$ is the Lie algebra %of the adjoint group $G^\ad$.
%\end{itemize}
%\end{defn}

We now state the main theorem:
\begin{thm}\label{mainthm}
Let $X/\CC$ be a connected smooth quasi-projective complex algebraic variety, with a base-point $x \in X(\CC)$. Let $G$ be a split connected reductive group over $\ZZ$, and let 
\[
\rho \colon \pi_1^{\mr{top}}(X, x) \to G(\CC)
\]
be a $G$-irreducible and $G$-cohomologically rigid local system such that
\begin{itemize}
    \item $\rho$ has quasi-unipotent local monodromy;
    \item the image of the composite homomorphism
    \[
    \pi_1^{\mr{top}}(X, x) \xrightarrow{\rho} G(\CC) \to A(\CC)
    \]
    to the maximal abelian quotient $A$ of $G$ has finite order.
\end{itemize}
Then $\ol{\mr{im}(\rho)}^0$, the connected component of the identity of the Zariski closure of the image $\mr{im}(\rho)$ of $\rho$, is semisimple; and there is a number field $L$ with ring of integers $\mc{O}_L$ such that $\rho$ is $G(\CC)$-conjugate to a homomorphism $\pi_1^{\mr{top}}(X, x) \to G(\mc{O}_L)$.
\end{thm}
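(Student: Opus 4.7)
The proof should split into two independent pieces: semisimplicity of $H := \ol{\im(\rho)}^0$, and integrality of $\rho$ itself. I would handle the semisimplicity component first. $G$-irreducibility forces $H$ to lie in no proper parabolic of $G$, so $H$ is reductive and decomposes as $H = Z^0 \cdot H^{\mr{der}}$ with $Z^0$ the identity component of its center. I would then show $Z^0 = 1$: any nontrivial central torus in $H$ must yield either a nonvanishing class in $H^1(\ol{X}, j_{!*}\ul{\fg^{\mr{der}}})$, contradicting $G$-cohomological rigidity, or a nonfinite image in the abelianization $A$, contradicting the abelianization hypothesis. The key local input is that quasi-unipotent local monodromy ensures constant sub-local systems of $\ul{\fg^{\mr{der}}}$ behave as expected under intermediate extension (their $j_{!*}$ is the constant extension), so infinitesimal deformations of $\rho$ coming from characters $\pi_1^{\mr{top}}(X,x) \to Z^0$ genuinely perturb $H^1(\ol X, j_{!*}\ul{\fg^{\mr{der}}})$.

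For the integrality portion, I would follow the Esnault--Groechenig template, adapted to $G$. By Proposition \ref{tangent space}, $\rho$ is an isolated smooth point of the Betti moduli scheme $\tLS_G(X)$, a finite-type $\QQ$-scheme, hence $\rho$ is defined over $\ol\QQ$; after $G(\CC)$-conjugation I may take $\rho$ to factor through $G(L')$ for some number field $L'$. I would then spread out to $\mc{O}_{L'}[1/N]$ and, for each finite place $\lambda \nmid N$, use the $G$-version of the nonabelian Hodge correspondence (Simpson in the projective case, refined by Mochizuki for the quasi-projective tame case with quasi-unipotent local monodromies) to transfer $\rho$ to a rigid $G$-Higgs bundle. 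Applying $p$-adic Simpson/Ogus--Vologodsky then produces an $\ell$-adic companion $\rho_\lambda \colon \pi_1^\et(X_{\ol{L'}}) \to G(\ol\QQ_\ell)$ which again is $G$-irreducible, $G$-cohomologically rigid, and has finite-order abelianization.

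The main obstacle, and the heart of the argument, is establishing $\ell$-adic boundedness of each companion $\rho_\lambda$. The plan is to invoke rigidity once more: the set of $G$-rigid local systems on $X$ with fixed abelianization and prescribed local monodromy is finite, so the $\Gal(\ol{L'}/L')$-orbit of companions is finite and therefore bounded in the GIT-adjoint quotient $(G \git G)(\ol\QQ_\ell)$. Combined with the semisimplicity of $\ol{\im(\rho_\lambda)}^0$ from the first part of the theorem (applied now to $\rho_\lambda$) and the properness of $G \to G \git G$ over the semisimple locus, this boundedness lifts to boundedness of $\rho_\lambda$ itself, placing its image (after conjugation) inside $G(\ol\ZZ_\ell)$. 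Assembling the $\lambda$-adic integral structures and using that $\rho$ already descends to $L'$ produces a $G(\mc{O}_L)$-valued conjugate of $\rho$ for some finite extension $L/L'$, which is the desired conclusion.
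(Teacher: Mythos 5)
Your semisimplicity argument takes a genuinely different route from the paper's. You argue directly over $\CC$: Richardson's theorem makes $H := \ol{\im(\rho)}^0$ reductive with $C_G(H)$ finite modulo $Z_G$, so the central torus $Z^0 \subset C_G(H)^0 \subset Z_G$ maps into the abelianization $A$ with finite kernel, and the finite-abelianization hypothesis then kills $Z^0$. This is a clean observation, and in fact it does not appear to need cohomological rigidity at all (your invocation of $H^1(\ol X, j_{!*}\ul{\fg^{\mr{der}}})$ for this step seems superfluous). The paper instead deduces semisimplicity only after spreading out and descending to a finite field (Proposition \ref{descent}, Corollary \ref{semisimple}), where it falls out of Deligne's Weil II (Corollaire 1.3.9). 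Both work; yours is arguably more elementary for this particular conclusion.

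For the integrality half there are two serious problems. First, your route via the $G$-nonabelian Hodge correspondence, $p$-adic Simpson, and Ogus--Vologodsky is the approach of \cite{esnault-groechenig:rigidconnection}, not of the paper you are reproducing, which follows \cite{esnault-groechenig:rigid}: spread out to char $p$, descend to $X_s$ (this step is where the monodromy-group bookkeeping is delicate for general $G$), and then apply L.~Lafforgue and Drinfeld's pro-semisimple companion theorem (\cite{drinfeld:pross}) to manufacture $\lambda'$-companions. Your sketch skips the arithmetic descent entirely and compresses the $F$-isocrystal/crystalline-to-\'etale comparison into a phrase; for general $G$ this crystalline route is not available off the shelf, whereas for $\GL_n$ it is. Second, and more fatally, the claim that ``$G \to G\git G$ is proper over the semisimple locus'' is false: semisimple conjugacy classes in a reductive group are closed \emph{affine} varieties, typically of positive dimension (e.g.\ $\mr{diag}(a, a^{-1})$ in $\GL_2$), so they are not compact and the map is not proper there. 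Consequently, boundedness of Frobenius classes in $(G\git G)(\ol\QQ_\ell)$ does not by itself lift to boundedness of $\rho_\lambda$ this way. The paper sidesteps the issue entirely: the companions $\rho_{i,\lambda',s}$ are continuous representations of the profinite group $\pi_1^{\et}(X_s, x_{\bar s})$, hence automatically have image in a compact subgroup of $G(\ol\QQ_{\lambda'})$, hence (after conjugation) in $G(\ol\ZZ_{\lambda'})$. The counting argument with the finite set $\mc S(G,h)$ then forces $\rho_\lambda$ itself to be among these integral companions. Finally, you have not addressed how to pass from $\lambda$-by-$\lambda$ integrality to a single $\mc O_L$-structure; the paper needs the strong approximation argument of Proposition \ref{integralityprop} for this, and there is also a preliminary reduction to $G$ simple adjoint (Lemma \ref{adreduction}).
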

\begin{rmk}
When $G= \mr{GL}_n$, and discounting the conclusion that $\ol{\im(\rho)}^0$ is semisimple, this is precisely \cite[Theorem 1.1]{esnault-groechenig:rigid}.
If $\rho$ comes from geometry, i.e.\ it arises as the monodromy representation of a sub-local system of $R^if_\ast \ul{\CC}$ for a smooth map $f \colon Y \to X$, then $\rho$ satisfies the two conditions of the theorem. Indeed, it first follows from the local monodromy theorem that $\rho$ has quasi-unipotent local monodromy. That the abelianized monodromy representation has finite image follows from \cite[Corollaire 4.2.8.iii(b)]{deligne:ht2}.

% If $r \colon G \to \GL(V)$ is a faithful representation, and $V = \oplus_{i = 1}^d V_i$ is a decomposition into irreducible $G$-modules, then $A$ should be $\prod\GL(\wedge^{\dim V_i} V_i)$. We need to see that $\pi_1(X) \to G(\CC) \to \GL(\wedge^{\dim V_i} V_i)$ has finite order. But we know that the map $\pi_1(X) \to \GL(\wedge^{\dim V_i} V_i) \to \GL(\wedge^{\dim V_i} V_i)^{\otimes m}$ is trivial for some $m$ by Deligne. Therefore it factors through $\mu_m$ which is finite as required. 

%\spq{Should I explain more? Also what is a reference for this theorem? In the $\ell$-adic case I know SGA7 tome 1 proves the $\ell$-adic monodromy theorem, then I guess one could use Betti-\'etale comparison theorems (Answer: I think this is well-enough known not to need a reference. SGA 7 has a proof as you say; in the complex setting there is a proof by Schmid in his VHS paper, and he also references some earlier paper of Landman.)}
%\spq{Why does $\rho$ satisfy the second condition?}
\end{rmk}
\begin{rmk}
Many naturally-occurring local systems are rigid and provably integral by other means. For instance, let $H$ be a connected semisimple Lie group with (real) rank at least 2 and having no compact factors, and let $\Gamma \subset H$ be an irreducible \textit{lattice} such that $\Gamma \cdot H^{\mr{is}}$ is dense in $H$, with $H^{\mr{is}}$ the minimal connected normal subgroup such that $H/H^{\mr{is}}$ is compact. Then Margulis has proven in turn the following remarkable results (see \cite[Theorems IX.6.5, IX.6.15]{margulis:discrete}, starting from his lattice superrigidity theorem:
\begin{itemize}
    \item $\Gamma$ is an arithmetic subgroup.
    \item For any homomorphism $\rho \colon \Gamma \to G(\CC)$, the Zariski-closure $\ol{\im(\rho)}$ is semisimple.
    \item For every representation $r \colon \Gamma \to \mr{GL}_n(\CC)$, $H^1(\Gamma, r)$ is trivial.
\end{itemize}
In particular, with $r$ equal to the adjoint action of $G$ on $\fg^{\mr{der}}$, if the associated locally symmetric space $\Gamma \backslash H$ has the structure of a complex quasi-projective variety, we see that $\rho$ is cohomologically rigid in the sense of Definition \ref{basicdefs}. (See Propositions \ref{tangent space group cohomology} and \ref{tangent space}; the vanishing condition here is in general stronger than what is needed for cohomological rigidity.) Many interesting rigid representations, however, have monodromy groups that are not lattices in their real Zariski-closures: this is the phenomenon of so-called \textit{thin} monodromy groups, and famous (hypergeometric) examples have been studied in \cite{deligne-mostow}, and more recently \cite{brav-thomas:thin}. In our algebro-geometric setting, the chain of reasoning is reversed: we assume cohomological rigidity, and then deduce the semisimplicity of the monodromy group and integrality (in place of arithmeticity) of the representation.
%if $H$ is a connected semisimple real Lie group with no simple factors that are compact or isogenous to $\mr{SL}_2(\RR)$, and $\Gamma \subset H$ is a lattice, then the natural representation $\Gamma \subset H$ is (physically) rigid: any continuous deformation of $\Gamma$ in $H$ is conjugate to $\Gamma$ (in fact if $\Gamma$ is cocompact, then $\Gamma \subset H$ is cohomologically rigid by by Weil rigidity). This follows from the Mostow rigidity theorem, and so it provides examples of rigid fundamental group representations when the associated locally symmetric space has the structure of a complex quasi-projective variety. If moreover $H$ has real rank $\geq 2$, 
%(i.e.\ it is not isogenous $\mr{SO}(n, 1)$ or $\mr{SU}(n, 1)$ for some $n$), 
%then Margulis's arithmeticity theorem implies that $\Gamma$ is arithmetic. Further, in this case Margulis superrigidity theorem \cite[IX Theorem 6.15]{margulis:discrete} shows that \emph{any} representation $\rho \colon \Gamma \to G(\CC)$ is cohomologically rigid, and that the Zariski closure of the image of $\rho$ is semisimple. 
\end{rmk}
%\spc{An interesting result of Margulis \cite[Ch. IX Thm 6.15(i)]{margulis:discrete} shows that for representations $\rho \colon \Gamma \to G$ of a lattice $\Gamma \subset H$ with $H$ of real rank $\geq 2$ (and two additional conditions on $\Gamma$), the Zariski closure $\ol{\mr{im}(\rho)}$ is semisimple. Our theorem shows this for some $\Gamma$ that are not lattices. In fact the second part of the same theorem proves that the representations are rigid as well.}

\subsection{Overview of the proof}
%\spi{In \S \ref{intconditionsection} we establish the local integrality condition needed for the main theorem. After some preliminaries on moduli of local systems in \S \ref{modulisection}, we proceed to the proof of the main theorem in \S \ref{descentssection} and \S \ref{companionsection}, first reducing to the case where $G$ is a simple adjoint group. (Now outline the strategy.)}

The proof follows the arguments of \cite{esnault-groechenig:rigid}, and indeed our debt to that paper will be evident throughout. The essential idea is, having shown the rigid representation is defined over the ring of $\Sigma$-integers $\mc{O}_{K, \Sigma} \subset K$ for some number field $K$ and finite set of places $\Sigma$, to check integrality at each $\lambda \in \Sigma$ by specializing $\rho$ to characteristic $p$ and using results of Drinfeld and Lafforgue on the existence of compatible systems of $\lambda$-adic representations. Such arguments are considerably subtler for general $G$ than for $\mr{GL}_n$, since in general the semisimple conjugacy classes associated to Frobenius elements do not uniquely characterize $G$-irreducible $\lambda$-adic representations. In particular, our argument must keep track of monodromy groups in a way that \cite{esnault-groechenig:rigid} does not, and we rely on Drinfeld's work \cite{drinfeld:pross} for the existence of the requisite compatible systems. 

Here is a more detailed section-by-section outline of the proof, restricting for notational simplicity to the case where $X$ is projective. In \S 3, we prove the local integrality condition needed for the main theorem: granted that $\rho$ factors as $\pi_1^{\mr{top}}(X, x) \xrightarrow{\rho} G(\mc{O}_{K, \Sigma})$, this reduces us to checking that for each $\lambda \in \Sigma$, the composite
 \[ \rho_\lambda \colon \pi_1^\top(X,x) \xrightarrow{\rho} G(\mc{O}_{K,\Sigma}) \to G(\ol{K}_\lambda) \]
can be conjugated into $G(\mc{O}_{\ol{K}_\lambda})$. 

The initial factorization of $\rho$ through some $G(\mc{O}_{K, \Sigma})$ is obtained, following \cite{simpson:higgs} and \cite{esnault-groechenig:rigid}, by studying a suitable moduli space of $G$-local systems on $X$. In \S 4, we recall the construction and basic properties of these moduli spaces. Our proof begins in earnest in \S 5. We consider a set $\mc{S}$ of (isomorphism classes of) $G$-local systems satisfying the hypotheses of the theorem. Using that the moduli space of $G$-local systems is finite type, it is deduced (here is the key input from cohomological rigidity) that $\mc{S}$ is finite, and that there exists a number field $K$ and a finite set of places $\Sigma$ such that each element of $\mc{S}$ is conjugate to a representation $\rho \colon \pi_1^\top(X,x) \to G(\mc{O}_{K,\Sigma})$. 
%The local integrality conidition of \S 3 implies that it is enough to show that for each $\lambda \in \Sigma$, the image of the representation
%    \[ \rho_\lambda \colon \pi_1^\top(X,x) \xrightarrow{\rho} G(\mc{O}_{K,\lambda}) \to G(\ol{K}_\lambda) \]
%can be conjugated into $G(\mc{O}_{\ol{K}_\lambda})$, i.e.\ each $\rho_\lambda$ is integral. 
For each $\rho' \in \mc{S}$ and each place $\lambda$ of $K$, we obtain, via extending scalars, a representation $\rho'_{\lambda} \colon \pi_1^{\mr{top}}(X, x) \to G(\ol{K}_{\lambda})$, and we denote the collection of these homomorphisms by $\mc{S}_{\lambda}$. The final step of the proof (and the most interesting) is to deduce the integrality of our original $\rho_{\lambda}$, for each $\lambda \in \Sigma$, from the integrality of the members of $\mc{S}_{\lambda'}$ for a fixed $\lambda' \not \in \Sigma$ (note that in \S \ref{descentssection} and \S \ref{companionsection} the notational roles of $\lambda$ and $\lambda'$ are reversed).

We indicate this last step in more detail. The crucial inputs are results of L.\ Lafforgue (\cite{llafforgue:chtoucas}) on the Langlands correspondence over function fields, and results of Drinfeld (\cite{drinfeld:pross}, building on \cite{drinfeld:deligneconj}) that promote Lafforgue's work to construct $\lambda$-adic companions for $\lambda'$-adic representations---and even for suitable $G(\ol{K}_{\lambda'})$-representations---of the fundamental group of a smooth variety of any dimension over a finite field. To exploit these results, following \cite{esnault-groechenig:rigid}, we spread the complex variety $X$ out and and take a fiber $X_{s}$ over a finite field. In the remainder of \S 5 we show that the collection $\mc{S}_{\lambda'}$
%$\rho_{\lambda'}$ for $\rho \in \mc{S}$ 
can be specialized and descended to (\'{e}tale)
$G$-local systems $\{\rho'_{\lambda',s}\}_{\rho' \in \mc{S}}$ on $X_s$. This step is subtler than in the case of $\mr{GL}_n$; it requires attending to the monodromy groups of the specialized representations (Proposition \ref{descent}), establishing along the way their (connected components') semisimplicity (Corollary \ref{semisimple}). 

In \S 6, we use the work of Lafforgue and Drinfeld mentioned above to produce a collection $\{\rho'_{\lambda,s}\}_{\rho' \in \mc{S}}$ of $\lambda$-adic companions (which are necessarily integral) of these $\rho'_{\lambda',s}$. The semisimplicity of the monodromy groups is essential here too, in order to make use of \cite{drinfeld:pross}. From the collection of $\rho'_{\lambda,s}$ on $X_s$, we construct via tame specialization $G$-local systems $\rho'_{\lambda} \colon \pi_1^\top(X,x) \to G(\mc{O}_{\ol{K}_\lambda})$ on our original $X$ (over $\CC$). It is then shown that the $\#\mc{S}$ local systems $\rho'_\lambda$ constructed in this way are pairwise distinct and satisfy all of the defining properties of elements of $\mc{S}$. By counting, our original $\rho_{\lambda}$ must belong to this set, each member of which is integral at $\lambda$ (by virtue of arising from \'{e}tale local systems). We conclude that for all $\lambda \in \Sigma$, $\rho_{\lambda}$ can be conjugated into $G(\mc{O}_{\ol{K}_{\lambda}})$, and then we are done by the results of \S \ref{intconditionsection}. 
%But then we have constructed $\#\mc{S}$ many integral $G$-local systems in $\{ \rho_{\lambda}\}_{\rho \in \mc{S}}$, and hence we conclude that all $\rho_{\lambda}$ are integral. 
%\spq{Would it be more readable if I introduced notation like $\mc{S}_\lambda$ like I did in my talk?}

\section{Notation}
For a connected reductive group $G$, we let $G^{\text{der}}$ denote the derived group, $G^{\text{ad}}$ denote the adjoint group, $G^{\text{sc}}$ denote the simply-connected cover of $G^{\text{der}}$ (equivalently, of $G^\text{ad}$), and $Z_G$ denote the center. For a detailed treatment of how these constructions carry over to the case of the base scheme $\Spec(\ZZ)$, and indeed much more generally, we refer the reader to \cite{conrad:luminy} (or to the original constructions in \cite{sga3.3}), especially \cite[Theorem 3.3.4, Example 5.1.7, Theorem 5.3.1 and following]{conrad:luminy}.

If $X$ is a complex variety and $x \in X(\CC)$, we write $\pi_1^\top(X,x)$ to denote the fundamental group based at $x$ of the topological space $X(\CC)$ with its analytic topology. If $X$ is a scheme, and $x \colon \Spec(\Omega) \to X$ is a separably-closed base-point, we let $\pi_1^{\et}(X, x)$ denote the corresponding \'{e}tale fundamental group. If $j \colon X \hookrightarrow \ol{X}$ is a good compactification, so that $\ol{X}$ is regular and $D = \ol{X} \setminus X$ is a strict normal crossings divisor, we let $\pi_1^{\et, t}(X,x)$ be the quotient of the \'etale fundamental group corresponding to the fully faithful embedding of finite \'etale covers $Y \to X$ tamely ramified along $D$ (i.e.\ such that the valuation $v_{D_i}$ on $k(X)$ of any irreducible component $D_i$ of $D$ is tamely ramified in $k(Y)$) into the category of all finite \'etale covers of $X$. For a reference that this is a Galois category, we refer the reader to \cite[Theorem 2.4.2]{grothendieck:tame}. This group is independent of the compactification used, as seen from \cite[Theorem 1.2, Theorem 5.4]{kerz-schmidt:tameness}. 

If $K$ is a number field or a local field, we denote its ring of integers by $\mc{O}_K$. In the number field case, if $\Sigma$ is a set of finite places of $K$, we let $\mc{O}_{K, \Sigma}$ denote the localization away from $\Sigma$ of $\mc{O}_K$. We write $K_{\infty}$ for the product of the completions of $K$ at its infinite places, and we write $\mathbb{A}_K^{\infty}$ for the finite ad\`{e}les of $K$.

\section{Local integrality condition}\label{intconditionsection}
The next proposition gives the criterion we will apply in \S \ref{companionsection} to deduce the main theorem.
\begin{prop}\label{integralityprop}
Let $\Gamma$ be a finitely generated group, and let $G$ be a connected reductive group over $\ZZ$. Let $\rho \colon \Gamma \to G(\mc{O}_{K,\Sigma})$ be a homomorphism, where $\Sigma$ is a finite set of finite places of $K$. Assume that for each $\lambda \in \Sigma$, the representation $\rho_\lambda \colon \Gamma \to G(\ol{K}_\lambda)$ is $G(\ol{K}_\lambda)$-conjugate to a representation $\Gamma \to G(\mc{O}_{\ol{K}_\lambda})$. Then there exists a finite extension $L/K$ such that $\rho$ is $G(L)$-conjugate to a representation $\Gamma \to G(\mc{O}_L)$. 
\end{prop}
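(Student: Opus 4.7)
Fix generators $\gamma_1, \ldots, \gamma_m$ of $\Gamma$ and write $g_i = \rho(\gamma_i) \in G(\mc{O}_{K,\Sigma})$. The goal is to produce $h \in G(L)$, for some finite extension $L/K$, such that $h g_i h^{-1} \in G(\mc{O}_L)$ for every $i$, i.e.\ integral at every finite place of $L$.

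First I would enlarge $K$ to a finite Galois extension $L/K$ so that for each $\lambda \in \Sigma$ and each place $v$ of $L$ above $\lambda$, there is an element $h_v \in G(L_v)$ with $h_v g_i h_v^{-1} \in G(\mc{O}_{L_v})$ for all $i$. The hypothesis supplies $h_\lambda \in G(\ol{K}_\lambda)$, defined over a finite extension of $K_\lambda$; enlarging $L$ brings it into $G(L_w)$ for some single place $w \mid \lambda$, and the Galois action on places of $L$ above $\lambda$ then spreads the choice to every such place.

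Next I would glue these local conjugators by weak approximation. Let $S$ denote the finite set of places of $L$ above $\Sigma$. For each $v \in S$ the subset
\[
U_v \;:=\; \{h \in G(L_v) : h g_i h^{-1} \in G(\mc{O}_{L_v}) \text{ for all } i\}
\]
is nonempty (by the previous step) and open in $G(L_v)$, since $G(\mc{O}_{L_v})$ is open in $G(L_v)$ ($G$ being smooth over $\ZZ$) and conjugation is continuous. Weak approximation for the connected algebraic group $G$ over the number field $L$ then yields $h \in G(L)$ lying in every $U_v$, so that $h g_i h^{-1} \in G(\mc{O}_{L_v})$ for each $v \in S$.

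The main obstacle will be controlling denominators at places of $L$ outside $S$. At such $v$ we have $g_i \in G(\mc{O}_{L_v})$ already, so $h g_i h^{-1} \in G(\mc{O}_{L_v})$ holds automatically whenever $h \in G(\mc{O}_{L_v})$; since $h \in G(L)$ is $v$-integral for all but finitely many $v$, only a finite exceptional set $T$ of places requires care. To neutralise $T$ I would reapply weak approximation with $S \cup T$ in place of $S$, imposing the additional requirement that $h$ be close to the identity at each $v \in T$ (a valid local conjugator there, since $g_i$ is already integral). The new $h$ is integral at $S \cup T$, and any remaining denominators lie in a strictly smaller set, so finitely many iterations terminate. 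A cleaner alternative is to invoke strong approximation for $G$ directly---reducing via the isogeny $G^{\mr{sc}} \to G^{\mr{der}}$ and handling the toric quotient $G/G^{\mr{der}}$ by class field theory---to produce $h$ in a single pass, possibly followed by one more finite enlargement of $L$ to absorb a Picard-group obstruction that arises when patching locally free $\mc{O}_{L_v}$-data into a genuine $\mc{O}_L$-structure.
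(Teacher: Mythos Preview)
Your iterated weak-approximation argument does not terminate. After finding $h$ with denominators only at a finite set $T$ disjoint from $S$, you reapply weak approximation over $S \cup T$ to obtain a new $h'$; but $h'$ will in general have denominators at some \emph{new} finite set $T'$, and there is no mechanism forcing $|T'| < |T|$ or $T' \subsetneq T$. Weak approximation controls finitely many places at a time but never all places simultaneously, so this loop can go on forever.

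Your ``cleaner alternative'' via strong approximation is the correct route, and it is precisely what the paper does, though the details differ from what you sketch. The paper's key observation is that conjugation by $h \in G$ depends only on the image of $h$ in $G^{\mr{ad}}$, so each local conjugator may be lifted to $G^{\mr{sc}}(\ol{K}_\lambda)$ and then pushed down to $G^{\mr{der}}$. One then forms an adelic element of $G^{\mr{sc}}(\mathbb{A}_L^\infty)$ that is the identity away from the places over $\Sigma$ and applies Kneser's strong approximation theorem for the simply connected group $G^{\mr{sc}}$ to approximate it by a point of $G^{\mr{sc}}(L)$; the image in $G^{\mr{der}}(L) \subset G(L)$ is the global conjugator. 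Two things in your sketch are unnecessary or missing: first, there is no toric quotient to handle, since the conjugator can always be taken in $G^{\mr{der}}$; second, strong approximation for $G^{\mr{sc}}$ requires $G^{\mr{sc}}(L_\infty)$ to be non-compact, which the paper arranges by choosing $L$ with no real embeddings---a point you should make explicit. No Picard-type obstruction arises in this approach.
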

\begin{proof}
%Short proof:
Each representation $\rho_\lambda \colon \Gamma \to G(\ol{K}_\lambda)$ is conjugate to a $G(\cO_{\ol{K}_\lambda})$-valued representation. That is, 
%if $G^\text{ad}$ is the adjoint group $G^{\text{ad}} = G/Z(G)$ then 
for each $\lambda \in \Sigma$, there is some $g'_\lambda \in G^\text{ad}(\ol{K}_\lambda)$ such that $g'_\lambda\rho {g_\lambda'}^{-1} \colon \Gamma \to G(\cO_{\ol{K}_\lambda})$ (also writing $g'_{\lambda}$ for any lift to $G(\ol{K}_{\lambda})$). 
%If $G^\text{der}$ is the derived subgroup of $G$, then $\wt{G^\text{der}}(\ol{K}_\lambda) = \wt{G^\text{ad}}(\ol{K}_\lambda)$, 
The map $G^{\text{sc}}(\ol{K}_{\lambda}) \to G^{\text{ad}}(\ol{K}_{\lambda})$ is surjective, so we can find $g_\lambda \in G^\text{sc}(\ol{K}_\lambda)$ lifting $g'_\lambda$. %such that $\Ad(g_\lambda) = g_\lambda'$ 
%for $\Ad \colon \wt{G^\text{der}} \to G^\text{ad}$. 
It follows that the image of $g_\lambda$ in $G^\text{der}(\ol{K}_\lambda)$ conjugates $\rho_\lambda$ into a $G(\cO_{\ol{K}_\lambda})$-valued representation. We denote this representation $\rho_\lambda^{g_\lambda}$. 

%Since $\Gamma$ is finitely generated, each $\rho_\lambda^{g_\lambda}$ is a $G(\cO_{L_{\lambda'}})$ representation 
Each $g_{\lambda}$ lies in $G^\text{sc}(L(\lambda))$ for some finite extension $L(\lambda)$ of $K_{\lambda}$, which we may assume to be Galois. In particular, $\rho_{\lambda}^{g_{\lambda}}$ has image contained in $G(L(\lambda)) \cap G(\mc{O}_{\ol{K}_{\lambda}})= G(\mc{O}_{L(\lambda)})$.
%over $K$ and some place $\lambda' \mid \lambda$. 
Since $\Sigma$ is finite, we can by class field theory find a Galois (in fact solvable) extension of number fields $L$ over $K$ such that for all $\lambda \in \Sigma$, and all places $\nu$ of $L$ above $\lambda$, $L_{\nu}$ is isomorphic (over $K_{\lambda}$) to $L(\lambda)$, and moreover $L$ has no real embeddings.
%and we may further assume $L/K$ is Galois. Let $\sigma \in \mathrm{Gal}(L/K)$. For any $\nu = \sigma \lambda' \mid \lambda$, let $g_\nu = {}^\sigma g_\lambda$. Then $g_\nu$ conjugates $\rho_\lambda$ into a $G(\cO_{L_\nu})$ valued representation.

Consider the element $g_{\A_L^\infty} = (g_\nu) \in G^{\text{sc}}(\A_L^\infty)$ whose $\nu$-th component is, for $\nu$ above $\lambda \in \Sigma$, the image of $g_{\lambda}$ defined above under any $K_{\lambda}$-isomorphism $L(\lambda) \xrightarrow{\sim} L_{\nu}$,
%defined above if $\nu$ divides a place of $\Sigma$
and the identity otherwise. Then (the image in $G^\text{der}$ of) $g_{\A_L^\infty}$ conjugates the representation
    \[ \rho_{\A_L^\infty} \colon \Gamma \xrightarrow{\rho} G(\cO_{K,\Sigma}) \to G(K) \to G(L) \to G(\A_L^\infty) \]
into a homomorphism $\Gamma \to G(\prod \cO_{L_\nu})$. Now suppose that $\gamma_1, \ldots, \gamma_n \in G(\A_L^\infty)$ are the images under $\rho_{\A_L^\infty}$ of a set of generators of $\Gamma$. Let $\pi \colon G^\text{sc} \to G^\text{der}$ be the quotient map and consider the map 
    \[ \mathrm{conj} \colon G^\text{sc}(\A_L^\infty) \to \prod_{i =1}^n G(\A_L^\infty), \qquad g \mapsto (\pi(g)\gamma_1\pi(g)^{-1},\ldots, \pi(g)\gamma_n\pi(g)^{-1}). \]
Then $U = \mathrm{conj}^{-1}( G(\prod \cO_{L_\nu})^n)$ is open and non-empty since it contains $g_{\A_L^\infty}$. Since $G^{\text{sc}}$ is simply-connected, and $G^{\text{sc}}(L_{\infty})$ is non-compact, the strong approximation theorem \cite{kneser:strongapproximation} implies there is an element $g' \in G^\text{sc}(L) \cap U$. Then the element $\pi(g')$ is the desired element of $G(L)$.
\end{proof}

\section{Moduli of representations}\label{modulisection}
In this section $G$ is a split connected reductive group over a field $K$ of characteristic zero, with center $Z_G$ and adjoint group $G^\text{ad} = G/Z_G$. Let $\Gamma$ be a finitely-generated group with presentation
    \[ \Gamma = \langle r_1,\ldots, r_k \mid \{s_{\alpha}\}_{\alpha \in B} \rangle, \]
    where the set $B$ indexes the relations $s_{\alpha}$.
Let $b \colon G \to A$ be the maximal abelian quotient of $G$, and fix a homomorphism $\theta \colon \Gamma \to A(K)$. Let $\Rep_G(\Gamma, \theta)$ be the affine $K$-variety of representations of $\Gamma$ with abelianization equal to $\theta$, that is for a $K$-scheme $T$, $\Rep_G(\Gamma, \theta)(T)$ is the inverse image of $\Gamma \xrightarrow{\theta} A(K) \to A(T)$ under $\Hom(\Gamma, G(T)) \to \Hom(\Gamma, A(T))$. More explicitly, the map $\Rep_G(\Gamma, \theta)(T) \to G^k(T)$ given by $\rho \mapsto (\rho(r_1),\ldots, \rho(r_k))$ identifies $\Rep_G(\Gamma,\theta)$ with the closed subscheme of $G^k$ given by given by the conditions $s_{\alpha}(g_1,\ldots, g_k) = 1$ for $\alpha \in B$ and $b(g_i) = \theta(g_i)$ for $i = 1,\ldots, k$. (Note that since $G^k$ is Noetherian, finitely many of the relations $s_{\alpha}$ suffice to describe this subscheme.) There is an action of $G^\ad$ on $\Rep_G(\Gamma, \theta)$ by conjugating homomorphisms. Let $\Loc_G(\Gamma, \theta)$ be the resulting stack quotient,
    \[\Loc_G(\Gamma,\theta) = [\Rep_G(\Gamma,\theta)/G^\ad].\]
If $T$ is a $K$-scheme then $\Loc_G(\Gamma,\theta)(T)$ is the groupoid with objects $(\mc{E}, f)$ and isomorphisms $(\mc{E}',f') \xrightarrow{\sim} (\mc{E}, f)$ given respectively as commutative diagrams
\begin{center}
    \begin{tikzcd}
        \mathcal{E} \arrow[r,"f"] \arrow{d} & \Rep_G(\Gamma,\theta)  \\
        T & 
    \end{tikzcd}
    \qquad
    \begin{tikzcd}
        \mathcal{E}' \arrow[rr, bend left, "f'"] \arrow[r, dotted, "\simeq"] \arrow[rd] & \mathcal{E} \arrow[r,"f"] \arrow{d} & \Rep_G(\Gamma,\theta)  \\
         & T & 
    \end{tikzcd}
\end{center}
where $\mathcal{E} \to T$ is a (left) $G^\ad$-torsor, and $f\colon \mc{E} \to \Rep_G(\Gamma,\theta)$ is a $G^\ad$-equivariant morphism (likewise for $(\mc{E}', f')$). The isomorphism $\mc{E}' \to \mc{E}$ is required to be $G^\ad$-equivariant. 

\begin{defn}
If $T$ is a $K$-scheme, and $\rho$ a representation $\rho \colon \Gamma \to G(T)$ whose abelianization is $\theta$, we denote by $[\rho]$ in $\Loc_G(\Gamma, \theta)(T)$ the object $(G^\ad_T, f_\rho)$, where $f_\rho \colon G^\ad_T \to \Rep_G(\Gamma, \theta)$ is given by $f_\rho(g) = g\rho g^{-1}$. 
\end{defn}

\begin{lem}\label{local systems are conjugacy classes}
If $\Omega$ is an algebraically closed field (containing $K$), then $\Loc_G(\Gamma,\theta)(\Omega)$ can be identified with the groupoid of conjugacy classes of representations $\Gamma \to G(\Omega)$ with abelianization equal to $\theta$. 
\end{lem}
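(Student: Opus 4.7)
The plan is to exhibit an equivalence between the groupoid $\Loc_G(\Gamma,\theta)(\Omega)$ and the groupoid whose objects are homomorphisms $\rho \colon \Gamma \to G(\Omega)$ with $b\circ\rho = \theta$ and whose morphisms $\rho' \to \rho$ are elements $g \in G^{\mathrm{ad}}(\Omega)$ satisfying $g\rho' g^{-1} = \rho$. The essential observation is that over $\Spec(\Omega)$ every $G^{\mathrm{ad}}$-torsor is trivial (since $G^{\mathrm{ad}}$ is smooth, so torsors are \'etale-locally trivial, and $\Omega$ being algebraically closed has no nontrivial \'etale covers). Thus each object $(\mc{E}, f) \in \Loc_G(\Gamma,\theta)(\Omega)$ admits a trivialization $\phi \colon G^{\mathrm{ad}}_{\Omega} \xrightarrow{\sim} \mc{E}$, and by $G^{\mathrm{ad}}$-equivariance the composite $f \circ \phi$ is determined by its value at the identity, which is a point of $\Rep_G(\Gamma, \theta)(\Omega)$, i.e.\ a representation $\rho \colon \Gamma \to G(\Omega)$ with abelianization $\theta$.

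First, I would construct the functor from $\Loc_G(\Gamma,\theta)(\Omega)$ to conjugacy classes of representations: given $(\mc{E}, f)$, pick a trivialization $\phi$ and send it to the homomorphism $\rho_{\phi}$ described above. Changing the trivialization $\phi$ to $\phi' = \phi \cdot g$ for $g \in G^{\mathrm{ad}}(\Omega)$ replaces $\rho_{\phi}$ by $g^{-1}\rho_{\phi} g$ (or $g \rho_{\phi} g^{-1}$ depending on left/right conventions used in the excerpt's definition of $[\rho] = (G^{\mathrm{ad}}_T, f_{\rho})$ with $f_{\rho}(g) = g\rho g^{-1}$), so the conjugacy class of $\rho_{\phi}$ is a well-defined invariant of $(\mc{E}, f)$. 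This shows the functor is well-defined on isomorphism classes.

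Next, I would check full faithfulness on morphisms. A morphism $(\mc{E}', f') \to (\mc{E}, f)$ is a $G^{\mathrm{ad}}$-equivariant map $\mc{E}' \to \mc{E}$ compatible with the maps to $\Rep_G(\Gamma, \theta)$. After choosing trivializations of both torsors, such a morphism is given by multiplication by a unique $g \in G^{\mathrm{ad}}(\Omega)$, and the compatibility condition $f' = f \circ (\text{mult by } g)$ unravels, using the formula $f_{\rho}(h) = h \rho h^{-1}$, into the identity $\rho' = g^{-1} \rho g$ (up to the same left/right convention). Conversely any such $g$ defines a morphism in the stack. Essential surjectivity is clear, since to any $\rho$ with $b \circ \rho = \theta$ the excerpt already attaches $[\rho] = (G^{\mathrm{ad}}_{\Omega}, f_{\rho})$, which maps to $\rho$ under our functor with the tautological trivialization.

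I do not expect a real obstacle: the only nontrivial input is the vanishing of $H^1_{\mathrm{fppf}}(\Spec(\Omega), G^{\mathrm{ad}})$ for $\Omega$ algebraically closed and $G^{\mathrm{ad}}$ smooth, which follows from the fact that smooth group schemes have fppf and \'etale cohomology agreeing and that $\Omega$ has trivial \'etale site. The remainder is bookkeeping about trivializations and equivariant maps, and the main subtlety is simply ensuring that the direction of conjugation matches the convention set in the definition of $f_{\rho}$.
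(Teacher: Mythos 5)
Your proof is correct and takes essentially the same approach as the paper: both rely on the key fact that every $G^{\mathrm{ad}}$-torsor over $\Spec(\Omega)$ is trivial (i.e.\ admits a section because $\Omega$ is algebraically closed), and then unravel the $G^{\mathrm{ad}}$-equivariance to match objects and morphisms with representations and conjugating elements. The only cosmetic difference is that you construct the functor $\Loc_G(\Gamma,\theta)(\Omega) \to \mc{C}$ by choosing trivializations, whereas the paper goes the other direction $\rho \mapsto [\rho]$ and verifies essential surjectivity at the end; the content is identical.
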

\begin{proof}
The groupoid $\mc{C}$ of conjugacy classes of morphisms $\Gamma \to G(\Omega)$ has objects being representations $\Gamma \to G(\Omega)$, whose abelianization is $\theta$. A morphism $\rho_1 \to \rho_2$ is given by inner automorphisms $\Ad(g) \colon G(\Omega) \to G(\Omega)$ for $g \in G^\ad(\Omega)$ such that $\rho_2 = \Ad(g) \circ \rho_1$, so there is a morphism only when $\rho_2 = g\rho_1 g^{-1}$ for some $g \in G^\ad(\Omega)$.

Given a representation $\rho \colon \Gamma \to G(\Omega)$ with abelianization $\theta$, we have the associated $G$-local system $ [\rho] = (G^\ad_\Omega, f_\rho)$ in $\Loc_G(\Gamma,\theta)(\Omega)$. The isomorphism $\rho \to g\rho g^{-1}$ for $g \in G^\ad(\Omega)$ gives a morphism of $G^\ad_\Omega$-torsors $R_{g^{-1}} \colon (G^\ad_\Omega, f_{\rho}) \to (G^\ad_\Omega, f_{g\rho g^{-1}})$ by $R_{g^{-1}}(h) = hg^{-1}$ on $\Omega$-points. This gives an isomorphism $[\rho] \to [g\rho g^{-1}]$ in $\Loc_G(\Gamma,\theta)$ since 
    \[ f_\rho(h) = h\rho h^{-1} = hg^{-1}g \rho g^{-1} gh^{-1} = f_{g \rho g^{-1}}(hg^{-1}). \]
Thus, $\rho \mapsto [\rho]$ is a functor $\mc{C} \to \Loc_G(\Gamma, \theta)(\Omega)$. 

To see that it is fully faithful, notice that both $\Hom(\rho_1, \rho_2)$ and $\Hom([\rho_1], [\rho_2])$ can naturally be identified the the set of $h \in G^\ad(\Omega)$ such that $\rho_2 = h\rho_1 h^{-1}$. 

We now show that the functor is fully faithful. By definition, an object of $\Loc_G(\Gamma, \theta)(\Omega)$ is a $G^\ad$-torsor $\mc{E} \to \Spec(\Omega)$ and a $G^\ad$-equivariant map $\varphi \colon \mc{E} \to \Rep_G(\Gamma,\theta)$. Since $\Omega$ is algebraically closed, there is a point $s \in \mc{E}(\Omega)$, which gives a $G^\ad$-equivariant isomorphism $\pi \colon G^\ad_\Omega \xrightarrow{\simeq} \mc{E}$ which is $g \mapsto gs$ on $\Omega$ points. Let $\rho = f(s)$. Since $\varphi(gs) = g\varphi(s)g^{-1}$, $\pi$ gives an isomorphism $[\rho] = (G^\ad, f_\rho) \xrightarrow{\simeq} (\mc{E}, \varphi)$. 
\end{proof}

We define a substack $\mr{IrrLoc}_G(\Gamma,\theta)$ of $\Loc_G(\Gamma,\theta)$ consisting of the objects of $\Loc_G(\Gamma,\theta)(S)$ whose base change along any geometric point $\Spec(\Omega) \to S$ yields via the identification of Lemma \ref{local systems are conjugacy classes} a conjugacy class of $G$-irreducible representations. Recall that a representation $\Gamma \to G(\Omega)$ is $G$-irreducible if the image is not contained in any proper parabolic subgroup. 
%\marginnote{\color{red}{Note that $G$-irreducible should everywhere be absolutely $G$-irreducible.}}

\begin{prop}
The substack $\mr{IrrLoc}_G(\Gamma,\theta) \subset \Loc_G(\Gamma,\theta)$ is open.
\end{prop}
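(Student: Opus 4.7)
My plan is to check openness on the smooth cover $\Rep_G(\Gamma,\theta) \to \Loc_G(\Gamma,\theta)$, i.e., to show that the locus $\mr{IrrRep}_G(\Gamma,\theta) \subset \Rep_G(\Gamma,\theta)$ of representations whose image (after any geometric base change) is contained in no proper parabolic is open. Since $\mr{IrrLoc}_G(\Gamma,\theta)$ is $G^{\ad}$-invariant, its preimage will be the quotient presentation of an open substack.

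The key input is that, because $G$ is split reductive over $K$, there are only finitely many $G$-conjugacy classes of proper parabolic subgroups (indexed by proper subsets of the simple roots). The plan is to fix representatives $P_1,\dots,P_r$ of these conjugacy classes and, for each $i$, define $\Rep_{P_i}(\Gamma,\theta)$ as the closed subscheme of $P_i^k$ cut out by the relations of $\Gamma$ together with the condition that the composite $\Gamma \to P_i \to G \to A$ equals $\theta$. I then consider the conjugation morphism
\[
\mu_i \colon G \times^{P_i} \Rep_{P_i}(\Gamma,\theta) \to \Rep_G(\Gamma,\theta), \qquad (g,\rho) \mapsto g\rho g^{-1}.
\]

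The main technical step will be to show that each $\mu_i$ is a proper morphism; once this is done, its scheme-theoretic image $Z_i$ is closed, and $Z := Z_1 \cup \cdots \cup Z_r$ is also closed, with complement exactly the $G$-irreducible locus. To establish properness, the plan is to factor $\mu_i$ as
\[
G \times^{P_i} \Rep_{P_i}(\Gamma,\theta) \hookrightarrow G/P_i \times \Rep_G(\Gamma,\theta) \twoheadrightarrow \Rep_G(\Gamma,\theta),
\]
where the first arrow is $(g,\rho) \mapsto (gP_i,\, g\rho g^{-1})$ and the second is the projection. The projection is proper because $G/P_i$ is projective, so the point of difficulty is verifying that the first arrow is a closed immersion. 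I expect this to follow by working \'etale-locally on $G/P_i$, where the $P_i$-bundle $G \to G/P_i$ trivializes and the map becomes the graph of $(g,\rho) \mapsto g\rho g^{-1}$ onto the closed locus $\{(gP_i,\rho_G) : g^{-1}\rho_G g \in P_i\}$, which is closed because $P_i \subset G$ is closed.

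Finally, the geometric identification of $Z$ with the reducible locus is immediate from unwinding definitions: a geometric point $\Spec(\Omega) \to \Rep_G(\Gamma,\theta)$ lies in $Z$ if and only if the corresponding $\rho\colon \Gamma \to G(\Omega)$ has image in some $G(\Omega)$-conjugate of some $P_i$, which by the choice of representatives is equivalent to $\rho$ being $G$-reducible. Openness of $\mr{IrrLoc}_G(\Gamma,\theta)$ then follows at once. The single main obstacle is the proof of properness of $\mu_i$; the rest is essentially formal bookkeeping.
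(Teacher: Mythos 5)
Your proof is correct and is essentially the paper's argument, recast on the atlas $\Rep_G(\Gamma,\theta)$ rather than on an arbitrary $S$-point of $\Loc_G(\Gamma,\theta)$: the paper forms the twisted flag bundle $Z_S(\mc{E},f)=G^\ad\backslash(\mc{E}\times G/P_S)\to S$, equips it with a $\Gamma$-action, and observes that the reducible locus is the image of the (closed) fixed-point locus under the proper map $Z_S\to S$, while you instead exhibit the reducible locus in $\Rep_G(\Gamma,\theta)$ as the image of the closed incidence subscheme of $G/P_i\times\Rep_G(\Gamma,\theta)$ under the proper projection. Both proofs hinge on the same two facts — finiteness of the set of conjugacy classes of parabolics and projectivity of $G/P$ — so this is the same route with only cosmetic differences in packaging.
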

\begin{proof}
For a $K$-scheme $S$ and $(\mc{E}, f) \in \Loc_G(\Gamma,\theta)(S)$ we say that $(\mc{E},f)$ is $G$-reducible at $s \in S$ if for some (equivalently any) a geometric point $\ol{s} \colon \Spec(\Omega) \to S$ lying over $s$, the conjugacy class of representations $\Gamma \to G(\Omega)$ corresponding to $(\mc{E}_{\ol{s}}, f_{\ol{s}})$ is $G$-reducible. We will show that the locus of $s \in S$ for which $(\mc{E},f)$ is $G$-reducible is closed. 

Fix a maximal torus and Borel subgroup $T \subset B \subset G$, with corresponding positive roots $\Phi^+$ and simple roots $\Delta$. For any finite set $S \subset \Delta$, let $P_S$ be the associated standard parabolic, and consider the bundle
    \[ \pi \colon Z = Z_S(\mc{E}, f) := G^\ad\backslash\left(\mc{E} \times (G/P_S) \right) \to S, \]
where $G^\ad$ acts diagonally (using that the left multiplication action of $G$ on $G/P_S$ factors through $G^\ad$). We get a $\Gamma$ action on $Z$ via the map $\mc{E} \to \Rep_G(\Gamma, \theta)$: if $x \in \mc{E}(U)$ maps via $f$ to $\rho_x \colon \Gamma \to G(U)$ in $\Rep_G(\Gamma,\theta)(U)$ then for $(x,y) \in (\mc{E} \times G/P_S)(U)$ define
    \[ \gamma\cdot(x, y) := (x, \rho_x(\gamma)y) \qquad \gamma \in \Gamma.  \]
This descends to an action on $Z(U)$ since if we change representatives to $(gx, gy)$ then 
    \[ \gamma\cdot(gx, gy) = (gx, \rho_{gx}(\gamma)gy) = (gx, g\rho_x(\gamma)g^{-1}gy) = (gx, g\rho_x(\gamma)y).\]

% I don't really need the following paragraph, it just says that the construction of $Z$ is functiorial in $S$. 
%Note that the construction of $Z_S(\mc{E},f) \to S$ is functorial in $S$ in the following sense: If $(\mc{E}', f') \in \Loc_G(\Gamma,\theta)(S')$ and $(\mc{E},f) \in \Loc_G(\Gamma,\theta)(S)$, then a morphism $(\mc{E}',f') \to (\mc{E},f)$ in the fiber category $\Loc_G(\Gamma,\theta)$ consists of $h \colon S' \to S$ and an isomorphism $\varphi \colon h^\ast \mc{E} \xrightarrow{\sim} \mc{E}'$ compatible with $f, f'$. Then $\varphi$ induces an isomorphism $Z_S(\mc{E}', f') \to h^\ast Z_S(\mc{E},f)$ of schemes over $S'$ (using the identification of $G\backslash(h^\ast \mc{E} \times (G/P_S)) \cong h^\ast Z_S(\mc{E}, f)$.

We claim that the locus $\pi(Z_S(\mc{E},f)^\Gamma) \subset S$ consists of all of the points $s \in S$ such that for any (equivalently every) geometric point $\ol{s} \colon \Spec(\Omega) \to S$ lying over $s$, the conjugacy class $\mr{im}(f_{\ol{s}} \colon \mc{E}_{\ol{s}}(\Omega) \to \Rep_G(\Gamma,\theta)(\Omega))$ contains a representation $\rho \colon \Gamma \to G(\Omega)$ whose image is contained in $P_S(\Omega)$. Indeed, given $s \in S$ and $\ol{s}$ a geometric point above $s$, we can write $\ol{s} = \pi([x,y])$ where $[x,y]$ is the class of $(x,y) \in (\mc{E}_{\ol{s}} \times G/P_S)(\Omega)$ in $Z_{\ol{s}}(\Omega)$. By multiplying $(x,y)$ by a suitable element of $G(\Omega)$, we can and do assume that $y = e$, the identity coset in $G/P_S(\Omega)$. Thus
\begin{align*}
\ol{s} \in Z_S(\mc{E},f)^\Gamma(\Omega) & \iff (x,e) = \gamma(x,e) = (x, \rho_x(\gamma)e) \text{ for all } \gamma \in \Gamma, \\
&\iff \rho_x(\gamma) \in \mr{Stab}_{G(\Omega)}(e) = P_S(\Omega) \text{ for all } \gamma \in \Gamma, \\
&\iff \mr{im}(f_{\ol{s}}(x) = \rho_x) \subset P_S(\Omega)
\end{align*}

It follows from this description that the locus of $s \in S$ where $(\mc{E}, f)$ is $G$-reducible is the set
    \[ \bigcup_{S \subsetneq \Delta} \pi(Z_S(\mc{E},f)^\Gamma) \]
For each $S$ the fixed point locus $Z_S(\mc{E},f)^\Gamma$ is closed, so $\pi(Z_S(\mc{E},f)^\Gamma)$ is closed (since $Z \to S$ is proper), and hence the finite union of such sets is closed. 
\end{proof}

Now suppose we are given $\gamma_1,\ldots, \gamma_N \in \Gamma$, and conjugacy classes $\mc{K}_i \subset G$ defined over $K$ for $i = 1,\ldots, N$. Note that these are locally closed subschemes of $G$. We consider the locally closed substack 
\begin{equation}\label{defn of moduli}
    \ul{M} = \ul{M}_G(\Gamma, \theta, \{(\gamma_i,\mc{K}_i)\}) \subset \mr{IrrLoc}_G(\Gamma, \theta),
\end{equation}
where an object $(\mc{E}, f) \in \mr{IrrLoc}_G(\Gamma,\theta)(T)$ is in $\ul{M}(T)$ if for every geometric point $\ol{t} \in T(\Omega)$, and any representation $\rho \colon \Gamma \to G^{\ad}(\Omega)$ with $[\rho] = (G^{\ad}_\Omega, f_\rho) \simeq (\mc{E}_{\ol{t}}, f_{\ol{t}})$, we have that $\rho(\gamma_i) \in \mc{K}_i(\Omega)$ for $i = 1,\ldots, N$. Note that this is independent of the choice of $\rho$, since any other $\rho'$ satisfying $(G^{\ad}_\Omega, f_{\rho'}) \simeq (\mc{E}_{\ol{t}}, f_{\ol{t}})$ will be $G^\ad(\Omega)$ conjugate to $\rho$ by lemma \ref{local systems are conjugacy classes}. 

To see that the stack $\ul{M}$ is locally closed in $\mr{IrrLoc}_G(\Gamma,\theta)$, argue as follows: Let $(\mc{E}, f)$ be an object of $\mr{IrrLoc}_G(\Gamma,\theta)(T)$, and let $T_0 \subset T$ be set of points $t \in T_0$ such that for any geometric point $\ol{t} \in T(\Omega)$ over $t$, $(\mc{E}_{\ol{t}}, f_{\ol{t}}) \in \ul{M}(\Omega)$. Let $\mc{E}_0 = \mc{E} \times_T T_0$. This is $G^\ad$ stable, so it suffices to show that $\mc{E}_0$ is locally closed. Let $\rho \colon \Gamma \to G(\mc{E})$ be the representation associated to $f$. Each $\rho(\gamma_i)$ is a map $\mc{E} \to G$ and
    \[ \mc{E}_0 = \bigcap_{i =1}^N \rho(\gamma_i)^{-1}(\mc{K}_i). \]
Since each $\mc{K}_i$ is locally closed in $G$, the intersection is locally closed. 
In summary, we have the following proposition.

\begin{prop}\label{moduli stack is algebraic}
Let $\Gamma$ be a finitely-generated group, $K$ a field, $G$ a split connected reductive group over $K$ with maximal abelian quotient $A$. Fix
\begin{itemize}
    \item $\theta \colon \Gamma \to A(K)$ a representation; \item $\gamma_1, \ldots, \gamma_N \in \Gamma$; \item conjugacy classes $\mc{K}_i \subset G$ defined over $K$ (for $i = 1,\ldots, N$).
\end{itemize} 

\begin{rmk}
The case of interest for this paper is when $K$ is a number field. 
\end{rmk}
Then the stack $\ul{M}$ of conjugacy classes of $G$-irreducible representations $\rho \colon \Gamma \to G$ with abelianization $\theta$, such that $\rho(\gamma_i) \in \mc{K}_i$ for all $i=1, \ldots, N$ is an algebraic stack of finite type over $K$. 
\end{prop}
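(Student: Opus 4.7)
The plan is simply to assemble the pieces constructed earlier in the section. Using the chosen finite presentation $\Gamma = \langle r_1,\ldots, r_k \mid \{s_\alpha\}_{\alpha \in B}\rangle$, the functor $\Rep_G(\Gamma,\theta)$ has already been realized explicitly as the closed subscheme of the finite-type affine scheme $G^k$ cut out by the equations $s_\alpha(g_1,\ldots, g_k) = 1$ and $b(g_i) = \theta(r_i)$. By Noetherianity of $G^k$, finitely many of the $s_\alpha$ suffice, so $\Rep_G(\Gamma,\theta)$ is an affine scheme of finite type over $K$.

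Next, the conjugation action of the smooth affine group scheme $G^\ad$ on $\Rep_G(\Gamma,\theta)$ produces the quotient $\Loc_G(\Gamma,\theta) = [\Rep_G(\Gamma,\theta)/G^\ad]$, which is therefore an algebraic stack of finite type over $K$. The substack $\mr{IrrLoc}_G(\Gamma,\theta)$ is open in $\Loc_G(\Gamma,\theta)$ by the preceding proposition, hence itself algebraic of finite type.

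Finally, $\ul{M}$ sits inside $\mr{IrrLoc}_G(\Gamma,\theta)$ as the locally closed substack where $\rho(\gamma_i) \in \mc{K}_i$ for $i=1,\ldots, N$. Pulled back to the atlas $\Rep_G(\Gamma,\theta) \to \mr{IrrLoc}_G(\Gamma,\theta)$, this cuts out the intersection
\[
\bigcap_{i=1}^N \mr{ev}_{\gamma_i}^{-1}(\mc{K}_i),
\]
where $\mr{ev}_{\gamma_i}\colon \Rep_G(\Gamma,\theta) \to G$ is the morphism $\rho \mapsto \rho(\gamma_i)$ (concretely, the composition of the embedding $\Rep_G(\Gamma,\theta) \hookrightarrow G^k$ with the multiplication map $G^k \to G$ encoding $\gamma_i$ as a word in the generators $r_1,\ldots, r_k$). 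Each $\mc{K}_i$ is locally closed in $G$, so the intersection is a $G^\ad$-stable locally closed subscheme of $\Rep_G(\Gamma,\theta)$, and the corresponding stack quotient is exactly $\ul{M}$. Hence $\ul{M}$ is algebraic and of finite type over $K$. The substantive work has all been done upstream: the finite-type presentation of $\Rep_G(\Gamma,\theta)$ and the openness of the $G$-irreducible locus. No step here presents a genuine obstacle; the conclusion is formal once those inputs are in hand.
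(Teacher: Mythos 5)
Your proof is correct and follows essentially the same route as the paper: the finite-type presentation of $\Rep_G(\Gamma,\theta)$ as a closed subscheme of $G^k$, openness of the irreducible locus from the preceding proposition, and local-closedness of the condition $\rho(\gamma_i)\in\mc{K}_i$ checked by pulling back to the atlas and intersecting preimages of the locally closed $\mc{K}_i$ under the evaluation maps. The paper phrases the last step in terms of an arbitrary $G^{\mr{ad}}$-torsor $(\mc{E},f)$ over a test scheme $T$, but this is the same verification you perform on the atlas $\Rep_G(\Gamma,\theta)$.
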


Now suppose $\rho_0 \colon \Gamma \to G(K)$ is a $G$-irreducible representation such that $\rho_0 (\gamma_i) \in \mc{K}_i(K)$. The $G$-local system $[\rho_0] = (G^\ad, f_{\rho_0})$ associated to $\rho_0$ is then an object of $\ul{M}(K)$ (for $\theta = \rho_0^\text{ab}$).

\begin{prop}\label{tangent space group cohomology}
Let $\fg^\mr{der}$ be the Lie algebra of $G^\mr{der}$. Then the Zariski tangent space $T_{[\rho_0]}\ul{M}$ is the kernel of 
    \[ H^1(\Gamma, \fg^\mr{der}(K)) \xrightarrow{\mr{res}} \bigoplus_{i=1}^n H^1(\gamma_i^\ZZ, \fg^\mr{der}(K)). \]
\end{prop}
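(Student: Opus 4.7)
The plan is to compute $T_{[\rho_0]}\underline{M}$ directly using dual-number points $K[\epsilon] = K[\epsilon]/(\epsilon^2)$, in two stages: first identify $T_{[\rho_0]}\mathrm{Loc}_G(\Gamma,\theta)$, then cut out the subspace where the conjugacy class conditions $\rho(\gamma_i) \in \mc{K}_i$ are infinitesimally preserved. Since $\mathrm{IrrLoc}_G(\Gamma,\theta)$ is open in $\mathrm{Loc}_G(\Gamma,\theta)$ and $\underline{M}$ is locally closed in $\mathrm{IrrLoc}_G$, this two-step reduction suffices.

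For the first stage, a $K[\epsilon]$-point of $\mathrm{Loc}_G(\Gamma,\theta)$ over $[\rho_0]$ is, after trivializing the reduction of the $G^{\mathrm{ad}}_{K[\epsilon]}$-torsor, a deformation $\tilde{\rho} \colon \Gamma \to G(K[\epsilon])$ of $\rho_0$ with $b\circ\tilde{\rho} = \theta$, taken modulo conjugation by $g \in G^{\mathrm{ad}}(K[\epsilon])$ reducing to the identity. Writing $\tilde{\rho}(\gamma) = (1 + \epsilon\, u(\gamma))\rho_0(\gamma)$, the homomorphism condition becomes the $1$-cocycle identity
\[
u(\gamma\delta) = u(\gamma) + \mathrm{Ad}(\rho_0(\gamma))\,u(\delta),
\]
and the fixed-abelianization condition forces $u$ to take values in $\ker(\fg \to \mathrm{Lie}(A)) = \fg^{\mathrm{der}}(K)$. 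Conjugation by $1+\epsilon X$ with $X \in \fg^{\mathrm{ad}}(K)$ changes $u$ by the coboundary $\partial X \colon \gamma \mapsto \mathrm{Ad}(\rho_0(\gamma))X - X$. Since $\fg^{\mathrm{ad}} \cong \fg^{\mathrm{der}}$ in characteristic zero (via $\fg = \mathfrak{z}(\fg) \oplus \fg^{\mathrm{der}}$), we conclude $T_{[\rho_0]}\mathrm{Loc}_G(\Gamma,\theta) = H^1(\Gamma, \fg^{\mathrm{der}})$, with $\Gamma$-action via $\mathrm{Ad}\circ\rho_0$.

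For the second stage, I would analyze when a deformation $\tilde{\rho}$ additionally satisfies $\tilde{\rho}(\gamma_i) \in \mc{K}_i(K[\epsilon])$. The orbit map $G \to \mc{K}_i$, $g \mapsto g\rho_0(\gamma_i)g^{-1}$, is smooth (as $\mc{K}_i$ is a $G$-orbit in a smooth variety in characteristic zero), so the tangent space to $\mc{K}_i$ at $\rho_0(\gamma_i)$, right-translated by $\rho_0(\gamma_i)^{-1}$, equals $\{\mathrm{Ad}(\rho_0(\gamma_i))v - v : v \in \fg(K)\}$. Thus $\tilde{\rho}(\gamma_i) \in \mc{K}_i(K[\epsilon])$ iff $u(\gamma_i) = \mathrm{Ad}(\rho_0(\gamma_i))v - v$ for some $v \in \fg(K)$; since $\mathfrak{z}(\fg)$ acts trivially via $\mathrm{Ad}$, we may arrange $v \in \fg^{\mathrm{der}}(K)$. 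This is exactly the assertion that the restricted cocycle $u|_{\gamma_i^{\ZZ}}$ is a coboundary, i.e.\ that its class in $H^1(\gamma_i^{\ZZ}, \fg^{\mathrm{der}})$ vanishes. Intersecting over $i = 1,\ldots, n$ yields the kernel described in the statement.

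The main delicacy I anticipate is bookkeeping between the three Lie algebras $\fg$, $\fg^{\mathrm{der}}$, and $\fg^{\mathrm{ad}}$ that enter in different roles — the orbit in $G$ giving a priori $\fg$-valued tangent vectors, the abelianization condition cutting cocycles down to $\fg^{\mathrm{der}}$, and the $G^{\mathrm{ad}}$-stack quotient contributing coboundaries via $\fg^{\mathrm{ad}}$ — and in verifying that they align via the characteristic-zero splitting $\fg = \mathfrak{z}(\fg) \oplus \fg^{\mathrm{der}}$. A secondary point is treating the stacky tangent space correctly: the quotient by $G^{\mathrm{ad}}$ produces genuine coboundaries, so one obtains $H^1$ (not just a space of cocycles) even though $\rho_0$ has nontrivial automorphisms in the $G$-irreducible setting.
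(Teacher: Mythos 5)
Your proposal is correct and follows essentially the same route as the paper: trivialize the $G^{\mathrm{ad}}$-torsor using formal smoothness, identify deformations of $\rho_0$ with $\fg^{\mathrm{der}}$-valued $1$-cocycles (the fixed abelianization condition forcing the cocycle into $\fg^{\mathrm{der}}$, the $G^{\mathrm{ad}}$-quotient contributing coboundaries via the char-$0$ isomorphism $\fg^{\mathrm{ad}}\cong\fg^{\mathrm{der}}$), and then interpret the conjugacy-class conditions $\rho(\gamma_i)\in\mc{K}_i$ as vanishing of the restricted classes. Your treatment of the second step via the smooth orbit map $G\to\mc{K}_i$ actually makes both inclusions explicit, which the paper's proof leaves partially implicit; otherwise the argument is the same.
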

\begin{proof}
For any point $(\mc{E},f) \in \ul{M}(K[\varepsilon])$, let $(\mc{E}_0, f_0) = i^\ast(\mc{E},f)$ with $i$ the closed immersion $\Spec(K) \to \Spec(K[\varepsilon])$. The tangent space $T_{[\rho_0]\ul{M}}$ consists of the points $v = \left(\mc{E}, f\right)$ in $\ul{M}(K[\varepsilon])$ such that $(\mc{E}_0, f_0) \xrightarrow{\simeq} (G^\ad, f_{\rho_0})$ over $K$ (the isomorphism not being part of the data).
Fix one such isomorphism $\varphi_0$. By the formal criterion of smoothness, the section $\Spec(K) \xrightarrow{} G^\ad \xrightarrow{\varphi_0^{-1}} \mc{E}_0$ extends to give a section of $\mc{E} \to \Spec(K[\varepsilon])$, hence $\mc{E}$ is trivial. By choosing a trivialization $\varphi \colon \mc{E} \xrightarrow{\sim} G^\ad_{\Spec(K[\varepsilon])}$ (extending $\varphi_0$ on the closed fiber) we get a representation $\rho = f\circ \varphi^{-1}(1) \colon \Gamma \to G(K[\varepsilon])$ whose composition with $G(K[\varepsilon]) \to G(K)$ is $\rho_0$, and which also maps $\gamma_i$ to an element of $\mc{K}_i(K[\varepsilon])$.

The map $\alpha(\gamma) = \rho(\gamma)\rho_0(\gamma)^{-1}$ gives an element $\alpha \in Z^1(\Gamma, \fg^\mr{der}(K))$ (the ``fixed determinant" condition ensures that $\alpha$ is valued in $\fg^{\mr{der}}(K) \subset \fg(K)$). The manipulation  
\begin{align*}
    \rho(\gamma\beta)\rho_0(\gamma\beta)^{-1} &= \rho(\gamma)\rho(\beta)\rho_0(\beta)^{-1}\rho_0(\gamma)^{-1}, \\
        &= \rho(\gamma)\rho_0(\gamma)^{-1} \cdot \mathrm{Ad}( \rho_0)(\gamma)\left(\rho(\beta)\rho_0(\beta)^{-1}\right).
\end{align*} 
verifies the cocycle condition, and it is similarly easy to see that changing the trivialization $\varphi$ modifies $\alpha$ by a coboundary.
%-------- The following is a check that this is actually a coboundary
%\spc{Maybe we omit this check? Indeed, any different trivialization will be $\xi \varphi$ for some $\xi \in \fg^\mr{der}(K)$. Then $\rho$ becomes ${}^\xi \rho = \xi \rho \xi^{-1}$. It follows that 
%\begin{align*}
%    {}^\xi\rho(\gamma)\rho_0(\gamma)^{-1}\left(\rho(\gamma)\rho_0(\gamma)\right)^{-1} &= \xi \rho(\gamma)\xi^{-1}\rho_0(\gamma)^{-1}\rho_0(\gamma) \rho(\gamma)^{-1}, \\
%        &= \xi\rho(\gamma)\xi^{-1}\rho(\gamma)^{-1}, \\
%        &= \xi\rho_0(\gamma)\xi^{-1}\rho_0(\gamma)^{-1},
%\end{align*}
%which is a coboundary.}{\color{red} Sounds good to me.}

Therefore there is a well defined map $T_{[\rho_0]}\ul{M} \to H^1(\Gamma, \fg^\mr{der}(K))$. It is injective because one can reconstruct the point $v$ from the class $[\alpha]$ by taking $\rho = \alpha \rho_0$, and then $v = [\rho]$. On the other hand, given a cocycle $\alpha$ the point $[\alpha \rho_0]$ is in $T_{[\rho_0]}\ul{M}$ if and only if for each $i = 1,\ldots, n$ we have $\alpha\rho_0(\gamma_i) \in \mc{K}_i(K[\varepsilon])$. The restriction $\mr{res}([\alpha])$ is trivial if and only if for each $i$ there is some $\xi_i \in \fg^\mr{der}(K)$ such that $\alpha(\gamma_i) = \xi_i\rho_0(\gamma_i)\xi_i^{-1}\rho_0^{-1}(\gamma_i)$, hence
    \[ \alpha\rho_0(\gamma_i) = \xi_i\rho_0(\gamma)\xi_i^{-1}, \]
and thus $\alpha\rho_0(\gamma_i)$ belongs to $\mc{K}_i(K[\varepsilon])$. We conclude that the image of $T_{[\rho_0]}\ul{M}$ in $H^1(\Gamma, \fg^{\mr{der}}(K))$ is the kernel of $\mr{res} \colon H^1(\Gamma, \fg^\mr{der}(K)) \to \oplus_{i=1}^n H^1(\gamma_i^\ZZ, \fg^\mr{der}(K))$. 
\end{proof}

\subsection{Back to geometry}\label{back to geometry}
We now specialize to the case of interest. Let $X$ be a smooth connected quasi-projective complex variety, fix a base-point $x \in X(\CC)$, and let $j \colon X \hookrightarrow \ol{X}$ be a good compactification with $D = \ol{X}\setminus X$ the strict normal crossings divisor written as the union $D = \cup_{i=1}^N D_i$ of its irreducible components $D_i$. Let $D^\text{sing}$ be the singular locus of $D$ and $U = \ol{X} \setminus D^\text{sing}$ so there is a factorization $j \colon X \xhookrightarrow{a} U \xhookrightarrow{b} \ol{X}$. For $i = 1,\ldots, N$, fix $y_i \in D_i \cap U$, $\Delta_i \subset U$ a small open ball around $y_i$, and $x_i \in \Delta_i^\times= \Delta_i \setminus (D_i \cap \Delta_i)$. Choose $T_i \in \pi_1^\top(X,x)$ that generates the image of $\ZZ \cong \pi_1^\top(\Delta_i^\times,x_i) \to \pi_1^\top(X,x)$ (with the map depending on a fixed choice of path from $x$ ending in $\Delta_i^\times$. 

Let $K$ be a characteristic zero field. Suppose $\mc{K}_i$ is a conjugacy class defined over $K$ for each $i = 1, \ldots, N$, % A $G$-local system on $X$ is a conjugacy class of representations $\pi_1^\top(X,x) \to G(\CC)$. 
fix a character $\theta \colon \pi_1^{\mr{top}}(X, x) \to A(K)$, and define 
    \[ \ul{M} := \ul{M}_G(\pi_1^{\top}(X,x),\theta, \{(T_i, \mc{K}_i)\}) \subset \mr{Loc}_G(\pi_1^\top(X,x),\theta), \]
the moduli stack of $G$-irreducible $G$-local systems on $X$ with abelianization $\theta$ and monodromy $\mc{K}_i$ around $D_i$, as constructed above. Let $\rho_0 \colon \pi_1^\top(X,x) \to G(K)$ be a homomorphism that is $G$-irreducible, has abelianization $\theta$, and has monodromy $\rho_0(T_i) \in \mc{K}_i(K)$. Then the associated $G$-local system $[\rho_0]$ gives an element of $\ul{M}(K)$. Let $\ul{\fg}^\mr{der}$ be the locally constant sheaf on $X$ corresponding to the representation 
\[ \pi_1^\top(X,x) \xrightarrow{\rho_0} G(K) \xrightarrow{\Ad} \GL(\fg^\mr{der}(K)). \]

\begin{prop}\label{tangent space}
The tangent space of $\ul{M}$ at the point $[\rho_0]$ is the finite-dimensional $K$-vector space $H^1(U, a_\ast \ul{\fg}^\mr{der})$. 
\end{prop}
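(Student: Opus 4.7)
The approach is to start from the group-cohomological description of $T_{[\rho_0]}\ul{M}$ given by Proposition \ref{tangent space group cohomology} and translate it into sheaf cohomology on $X$, then identify the resulting kernel via the Leray spectral sequence for the open immersion $a\colon X \hookrightarrow U$.

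First I would invoke the natural comparison $H^1(\pi_1^\top(X, x), \fg^\mr{der}) \xrightarrow{\sim} H^1(X, \ul{\fg}^\mr{der})$ induced by the classifying map $X \to K(\pi_1^\top(X, x), 1)$ (degree-$1$ cohomology with local coefficients depends only on $\pi_1$), and the analogous identification $H^1(T_i^\ZZ, \fg^\mr{der}) \xrightarrow{\sim} H^1(\Delta_i^*, \ul{\fg}^\mr{der})$. By naturality these isomorphisms carry the restriction map of Proposition \ref{tangent space group cohomology} to the topological restriction along $\Delta_i^* \hookrightarrow X$ (the choice of path from $x$ to $x_i$ only affects the cocycles by a coboundary), so
\[ T_{[\rho_0]}\ul{M} \;=\; \ker\Bigl(H^1(X, \ul{\fg}^\mr{der}) \longrightarrow \bigoplus_{i=1}^N H^1(\Delta_i^*, \ul{\fg}^\mr{der})\Bigr). \]

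Next I would apply the Leray spectral sequence $E_2^{p,q} = H^p(U, R^q a_* \ul{\fg}^\mr{der}) \Rightarrow H^{p+q}(X, \ul{\fg}^\mr{der})$, whose five-term exact sequence begins
\[ 0 \to H^1(U, a_* \ul{\fg}^\mr{der}) \to H^1(X, \ul{\fg}^\mr{der}) \xrightarrow{\eta} H^0(U, R^1 a_* \ul{\fg}^\mr{der}). \]
Since $a$ is an open immersion, $R^1 a_* \ul{\fg}^\mr{der}$ is supported on $D \cap U = \sqcup_i(D_i \cap U)$. Because each $y_i$ is a smooth point of $D$, a small neighborhood of $y_i$ in $U$ has the form $\Delta_i \times N$ with $N$ a contractible open in $D_i \cap U$, so its intersection with $X$ is $\Delta_i^* \times N$ and the stalk of $R^1 a_* \ul{\fg}^\mr{der}$ at $y_i$ is $H^1(\Delta_i^*, \ul{\fg}^\mr{der})$. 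The restriction $R^1 a_* \ul{\fg}^\mr{der}|_{D_i \cap U}$ is thus a local system on the connected variety $D_i \cap U$ (complement of a proper closed subset of the irreducible $D_i$), and global sections of a local system embed into any stalk as monodromy invariants, so the evaluation $H^0(D_i \cap U, R^1 a_* \ul{\fg}^\mr{der}|_{D_i \cap U}) \hookrightarrow H^1(\Delta_i^*, \ul{\fg}^\mr{der})$ at $y_i$ is injective.

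Finally I would check that the composite
\[ H^1(X, \ul{\fg}^\mr{der}) \xrightarrow{\eta} H^0(U, R^1 a_* \ul{\fg}^\mr{der}) \hookrightarrow \bigoplus_{i=1}^N H^1(\Delta_i^*, \ul{\fg}^\mr{der}) \]
agrees with the topological restriction map of the first step; this follows from the standard description of the Leray edge map in terms of restriction to punctured tubular neighborhoods of the $y_i$, together with the contractibility of $N$. Since the second arrow is injective, the two maps have the same kernel, yielding $T_{[\rho_0]}\ul{M} \cong H^1(U, a_* \ul{\fg}^\mr{der})$. Finite-dimensionality is then automatic: $\pi_1^\top(X, x)$ is finitely generated (or, equivalently by Proposition \ref{moduli stack is algebraic}, $\ul{M}$ is of finite type), so the ambient $H^1(X, \ul{\fg}^\mr{der})$ is finite-dimensional. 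The only mildly fiddly step is the identification of the Leray edge map with topological restriction; the rest is routine naturality and local-to-global support analysis.
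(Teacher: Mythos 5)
Your proposal is correct and follows essentially the same route as the paper: translate Proposition \ref{tangent space group cohomology} into sheaf cohomology via the standard degree-one comparison and then identify the kernel of the restriction map with $H^1(U, a_*\ul{\fg}^{\mr{der}})$ using the Leray spectral sequence for $a$. You simply unwind in detail the support/stalk analysis for $R^1 a_*\ul{\fg}^{\mr{der}}$ and the compatibility of the edge map with topological restriction, which the paper dispatches with the parenthetical ``(which can be seen using the Leray spectral sequence for $a$).''
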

\begin{proof}
Giving the Lie algebra $\fg^{\mr{der}}(K)$ the structure of a $\pi_1(X,x)$-module vial $\Ad \circ \rho_0$, there is a commutative diagram
\begin{center}
    \begin{tikzcd}
        0 \arrow[r] & T_{[\rho_0]}\ul{M} \arrow[r] & H^1(\pi_1^\top(X), \fg^{\mr{der}}(K)) \arrow[r, "\mr{res}"] \arrow[d, "\simeq"] & \oplus_{i=1}^N H^1(\pi_1(\Delta_i^\times), \fg^{\mr{der}}(K)) \arrow[d, "\simeq"] \\
         & & H^1(X, \ul{\fg}^\mr{der}) \arrow[r, "\mr{res}"] & \oplus_{i=1}^N H^1(\Delta_i^\times, \ul{\fg}^\mr{der}|_{\Delta_i^\times}) 
    \end{tikzcd}
\end{center}
and Proposition \ref{tangent space group cohomology} shows that the top row is exact. The result follows from the identification of the kernel of the map on the bottom row with $H^1(U, a_\ast \ul{\fg}^\mr{der})$ (which can be seen using the Leray spectral sequence for $a$). 
\end{proof}

\begin{rmk}\label{intermediate extension}
As was noted in \cite[Remark 2.4]{esnault-groechenig:rigid}, we have $H^1(U, a_\ast \ul{\fg}^{\mr{der}}) = H^1(\ol{X}, j_{!\ast} \ul{\fg}^{\mr{der}})$, where $j_{!\ast} \ul{\fg}^{\mr{der}}$ is the intermediate extension.
\end{rmk}

%\section{Proof of Theorem \ref{mainthm adjoint case}}\label{descentssection}
\section{Arithmetic descents}\label{descentssection}
\subsection{Reduction to the adjoint case}
We begin the proof of Theorem \ref{mainthm} by reducing to the case in which $G$ is a simple (split, as we always assume) adjoint group:
\begin{lem}\label{adreduction}
It suffices to prove Theorem \ref{mainthm} in the case where $G$ is a simple adjoint group.
\end{lem}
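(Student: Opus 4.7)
The plan is to carry out the reduction in two stages: first from a general reductive $G$ to the adjoint semisimple case, and then from adjoint semisimple to a product of simple adjoint factors, which can be handled one at a time. For the first stage, I consider the composition
\[ \rho^\mr{ad} \colon \pi_1^\top(X, x) \xrightarrow{\rho} G(\CC) \to G^\mr{ad}(\CC). \]
The hypotheses of Theorem \ref{mainthm} transfer to $\rho^\mr{ad}$: parabolic subgroups of $G$ are precisely the preimages of parabolic subgroups of $G^\mr{ad}$, so $G$-irreducibility and $G^\mr{ad}$-irreducibility coincide; the isogeny $G^\mr{der} \to G^\mr{ad}$ is an isomorphism on Lie algebras in characteristic zero and the two adjoint actions agree, so the two cohomological rigidity conditions coincide; quasi-unipotent local monodromy is preserved by any morphism of algebraic groups; and the abelianization condition is vacuous since $G^\mr{ad}$ has trivial abelianization.

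Granting the theorem for $G^\mr{ad}$ (which the second stage below reduces to the simple adjoint case we are allowed to assume), I obtain a number field $L$ such that, after $G^\mr{ad}(\CC)$-conjugation (lifted to $G(\CC)$-conjugation using surjectivity of $G(\CC) \to G^\mr{ad}(\CC)$), the representation $\rho^\mr{ad}$ lands in $G^\mr{ad}(\cO_L)$ and $\ol{\im(\rho^\mr{ad})}^0$ is semisimple. The abelianization $\rho^\mr{ab} \colon \pi_1^\top(X,x) \to A(\CC)$ has finite image in the split torus $A$, hence takes values in roots of unity and lies in $A(\cO_L)$ after enlarging $L$ cyclotomically. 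To lift integrality to $\rho$ itself, I use the central isogeny $\pi \colon G \to G^\mr{ad} \times A$ with finite kernel $F = Z_G \cap G^\mr{der}$, surjective over $\CC$: for each of finitely many generators $\gamma$ of $\pi_1^\top(X,x)$, the element $\pi(\rho(\gamma)) = (\rho^\mr{ad}(\gamma), \rho^\mr{ab}(\gamma))$ is an $\cO_L$-point of $G^\mr{ad} \times A$, and its fiber under $\pi$ is a finite $\cO_L$-scheme. All $\CC$-points of such a fiber are automatically $\cO_{L'}$-points for some finite extension $L'/L$, since elements of a finite $\cO_L$-algebra are integral over $\cO_L$. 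A compositum handles all generators over a single $L'$, and $\rho$ is then integral over that $L'$. Finally, the finite-abelianization hypothesis forces $\ol{\im(\rho)}^0 \subset G^\mr{der}$, so the isogeny $G^\mr{der} \to G^\mr{ad}$ restricts to an isogeny $\ol{\im(\rho)}^0 \to \ol{\im(\rho^\mr{ad})}^0$, which transports semisimplicity.

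For the second stage, I factor the split adjoint semisimple group $G^\mr{ad}$ as a product $\prod_{i=1}^k G_i$ of split simple adjoint groups over $\ZZ$, and write $\rho^\mr{ad} = (\rho_1, \ldots, \rho_k)$. The hypotheses decompose factor-wise: proper parabolic subgroups of $\prod_i G_i$ are products in which at least one factor is a proper parabolic of some $G_i$, so $G^\mr{ad}$-irreducibility is equivalent to $G_i$-irreducibility of each $\rho_i$; the adjoint Lie algebra splits as $\bigoplus_i \fg_i$ as a $\pi_1^\top(X,x)$-module, so cohomological rigidity decomposes by additivity of cohomology; and quasi-unipotency in a product is checked componentwise. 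Applying the simple adjoint case to each $\rho_i$ and taking a compositum of number fields yields integrality of $\rho^\mr{ad}$, while a Goursat-type argument shows that $\ol{\im(\rho^\mr{ad})}^0$, sitting in $\prod_i \ol{\im(\rho_i)}^0$ with surjective projections onto semisimple factors, is itself semisimple.

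The main technical obstacle I anticipate is the integrality lifting in the first stage: extracting integrality of $\rho$ from integrality of $\rho^\mr{ad}$ and $\rho^\mr{ab}$ across the central isogeny $G \to G^\mr{ad} \times A$. This is tractable because the kernel of the isogeny is finite and $\pi_1^\top(X,x)$ is finitely generated, so only finitely many choices of lift are required and a single finite extension of $L$ absorbs the resulting denominators.
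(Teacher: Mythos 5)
Your proof is correct, and its overall architecture --- reduce to the adjoint semisimple case, decompose factor-wise over simple factors, then lift integrality back to $G$ using the finite-abelianization hypothesis, and transport semisimplicity along the isogeny $\ol{\im(\rho)}^0 \to \ol{\im(\rho^{\mr{ad}})}^0$ --- matches the paper's. The one step where you take a genuinely different route is the integrality-lifting across the isogeny. The paper observes that $\im(\rho) \subset G^{\mr{der}}(\CC)\cdot Z_G(\ol{\ZZ})$ and reduces to lifting each generator from $G^{\mr{ad}}(\mc{O}_K)$ along $G^{\mr{sc}} \to G^{\mr{ad}}$; the obstruction lives in $H^1_{\mr{fppf}}(\Spec(\mc{O}_K), \mu_r)$ for the $\mu_r$-factors of the kernel, and is trivialized over a finite extension via the fppf Kummer sequence (killing the contributions from $\mc{O}_K^{\times}/(\mc{O}_K^{\times})^r$ and $\mr{Pic}(\mc{O}_K)[r]$). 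You instead work with the isogeny $\pi\colon G \to G^{\mr{ad}}\times A$ with finite flat kernel $Z_G \cap G^{\mr{der}}$, and use that the preimage of an $\mc{O}_L$-point under a finite morphism is the spectrum of a finite $\mc{O}_L$-algebra, whose $\CC$-points are automatically integral over $\mc{O}_L$ and hence lie in $\mc{O}_{L'}$ for a finite extension $L'$. Both arguments are valid; yours avoids explicit fppf cohomology and Kummer theory, but it rests on the unproved assertion that $\pi$ is a finite morphism of $\ZZ$-group schemes. This does hold --- $\pi$ factors as the finite faithfully flat quotient $G \to G/(Z_G\cap G^{\mr{der}})$ followed by a monomorphism that is an isomorphism on every geometric fiber, hence (by $\ZZ$-flatness and a Nakayama argument) an isomorphism --- but since this finiteness is precisely what is doing the work, you should say a word about why it holds, much as the paper's Kummer-theoretic detour is what makes its version self-contained.
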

\begin{proof}
Consider $\rho$ as in Theorem \ref{mainthm}. The projectivization $\mathbb{P}(\rho) \colon \pi_1^{\mr{top}}(X, x) \xrightarrow{\rho} G(\CC) \to G^{\mr{ad}}(\CC)$ is $G^{\mr{ad}}$-irreducible, $G^{\mr{ad}}$-cohomologically rigid, and has quasi-unipotent local monodromy (as is evident from the definitions). Note that $\mathbb{P}(\rho)$ having these properties is equivalent to its projections to each simple factor of $G^{\mr{ad}}$ having these properties. Thus, under the assumption of the Lemma, we may assume that after replacing $\rho$ by a $G(\CC)$-conjugate, $\mathbb{P}(\rho)$ factors through $G^{\mr{ad}}(\mc{O}_K)$ for some number field $K$. We claim that $\rho$ itself factors through $G(\mc{O}_L)$ for some finite extension $L/K$. Indeed, $\rho$ of course factors through the preimage of $\mr{im}(\mathbb{P}(\rho))$ in $G(\CC)$. By our assumption that $\rho$ has finite image in $A(\CC)$, $\im(\rho)$ is further contained in $G^{\mr{der}}(\CC) \cdot Z_G(\ol{\ZZ})$, since any point of finite order in $Z_G(\CC)$ is contained in $Z_G(\ol{\ZZ})$.
Thus it suffices from the long-exact sequence in cohomology to check that the image of each of the generators of the finitely-generated group $\pi_1^{\mr{top}}(X, x)$ in $G^{\mr{ad}}(\mc{O}_K)$ is itself in the image of $G^{\mr{sc}}(\mc{O}_L)\to G^{\mr{ad}}(\mc{O}_L)$ for some finite extension $L/K$. 
%(We have used here the assumption that the ``determinant" of $\rho$ has finite order.) 
Since $\ker(G^{\mr{sc}} \to G^{\mr{ad}})$ is a product (over the simple factors) of group schemes $\mu_r$, it suffices to find $\mc{O}_L$ trivializing any given class in $H^1_{\mr{fppf}}(\Spec(\mc{O}_K), \mu_r)$. The (fppf) Kummer sequence yields a short-exact sequence
\[
\mc{O}_K^{\times}/(\mc{O}_K^{\times})^r \to H^1_{\mr{fppf}}(\Spec(\mc{O}_K), \mu_r) \to \mr{Pic}(\mc{O}_K)[r].
\]
The desired $L$ exists because every element of the class group of $\mc{O}_K$ becomes trivial after passage to some finite extension $\mc{O}_L$, and we can adjoin an $r^{th}$ root to trivialize an element of $\mc{O}_K^\times/(\mc{O}_K^\times)^r$. Finally, semisimplicity of the connected monodromy of $\mathbb{P}(\rho)$ clearly implies that of $\rho$ itself, as above using that $\rho$ has finite image in $A(\CC)$.
%Finally, having reduced to the adjoint case, we can further reduce to the simple adjoint case, working one simple factor at a time.
\end{proof}
In this section and \S \ref{companionsection}, we will therefore prove:
\begin{thm}\label{mainthm adjoint case}
Theorem \ref{mainthm} is true if $G$ is simple and adjoint. 
\end{thm}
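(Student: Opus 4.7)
The plan is to follow the arithmetic strategy outlined in \S 1, adapting the arguments of \cite{esnault-groechenig:rigid} to general simple adjoint $G$ by tracking monodromy groups throughout. Since $G$ is adjoint and simple, the maximal abelian quotient $A$ of $G$ is trivial, so the character $\theta$ in the moduli-theoretic formalism of \S\ref{modulisection} is forced to be trivial. For each irreducible component $D_i$ of the boundary, record the conjugacy class $\mc{K}_i \subset G$ of $\rho(T_i)$; these classes are quasi-unipotent by hypothesis and are defined over some number field. Forming the moduli stack
\[
\ul{M} \;=\; \ul{M}_G\bigl(\pi_1^{\top}(X,x),\mathbf{1},\{(T_i,\mc{K}_i)\}\bigr)
\]
of Proposition \ref{moduli stack is algebraic}, Proposition \ref{tangent space} combined with cohomological rigidity shows that the Zariski tangent space of $\ul{M}$ at $[\rho]$ vanishes, hence $[\rho]$ is an isolated reduced point of this finite-type stack. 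Running over the finitely many tuples $(\mc{K}_1,\dots,\mc{K}_N)$ compatible with a fixed bound on the order of the unipotent parts of the local monodromies, I obtain a \emph{finite} set $\mc{S}$ of isomorphism classes of $G$-local systems satisfying the hypotheses of Theorem \ref{mainthm}, together with a number field $K$ and a finite set of bad places $\Sigma$ such that every $\rho'\in\mc{S}$ is conjugate into $G(\mc{O}_{K,\Sigma})$.

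By Proposition \ref{integralityprop}, to conclude Theorem \ref{mainthm adjoint case} it suffices to check, for each $\lambda\in\Sigma$, that the base change $\rho_\lambda\colon\pi_1^{\top}(X,x)\to G(\ol{K}_\lambda)$ is conjugate to a representation valued in $G(\mc{O}_{\ol{K}_\lambda})$. To prove this, fix an auxiliary place $\lambda'$ of $K$ outside $\Sigma$ of different residue characteristic from $\lambda$. I would spread $X$ out to a smooth scheme $\mc{X}$ over an open part of $\Spec(\mc{O}_K)$, choose a closed point $s$ whose residue field has large characteristic $p$, and work with the geometric fiber $X_s$. Via the tame specialization morphism from $\pi_1^{\top}(X,x)$ to a quotient of $\pi_1^{\et,t}(X_s)$, the next step is to descend the family $\mc{S}_{\lambda'}:=\{\rho'_{\lambda'}\mid \rho'\in\mc{S}\}$ of $G(\ol{K}_{\lambda'})$-local systems on $X$ to a family of arithmetic, tame $G(\ol{K}_{\lambda'})$-local systems $\{\rho'_{\lambda',s}\}$ on $X_s$. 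Here is the crux and main obstacle: unlike in the $\mathrm{GL}_n$ setting, Frobenius semisimple conjugacy classes do not determine a $G$-irreducible $\lambda'$-adic representation, so the descent requires matching monodromy groups, and this in turn requires knowing that the connected component of the Zariski closure of each monodromy group is semisimple. I expect to prove that semisimplicity by leveraging $G$-irreducibility, the triviality of the abelianization, and the cohomological rigidity hypothesis to rule out any positive-dimensional unipotent or toric radical in the connected monodromy, via the group-cohomological tangent-space calculation of Proposition \ref{tangent space group cohomology}.

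With the descent accomplished, the final step is to invoke the nonabelian Drinfeld--Lafforgue companions theorem (\cite{drinfeld:pross}, whose hypotheses are precisely the semisimplicity of the connected monodromy just established, together with L.~Lafforgue \cite{llafforgue:chtoucas}) to produce, for each $\rho'_{\lambda',s}$, a $\lambda$-adic companion $\rho'_{\lambda,s}$ on $X_s$ that is automatically valued in $G(\mc{O}_{\ol{K}_\lambda})$. Tame specialization applied in the reverse direction upgrades each companion to a representation $\rho'_\lambda\colon\pi_1^{\top}(X,x)\to G(\mc{O}_{\ol{K}_\lambda})$. To finish, verify that the $\#\mc{S}$ representations $\{\rho'_\lambda\}_{\rho'\in\mc{S}}$ so produced are pairwise non-conjugate and that each satisfies the defining axioms for membership in $\mc{S}_\lambda$; comparing cardinalities then forces our original $\rho_\lambda$ to lie among them, hence to be conjugate into $G(\mc{O}_{\ol{K}_\lambda})$. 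Since this holds for every $\lambda\in\Sigma$, Proposition \ref{integralityprop} completes the proof, and the semisimplicity of $\ol{\im(\rho)}^0$ is read off from the corresponding property of the companions.
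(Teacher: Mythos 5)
Your outline correctly reconstructs the large-scale architecture of the paper's proof: moduli stacks give finiteness and an $\mc{O}_{K,\Sigma}$ model, spreading out and tame specialization reduce to characteristic $p$, Drinfeld--Lafforgue companions supply integrality at the bad places, and a counting argument closes the loop via Proposition \ref{integralityprop}. The notational swap of $\lambda$ and $\lambda'$ is harmless.

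However, there is a genuine gap in your treatment of the semisimplicity of the connected monodromy group. You propose to establish that $\ol{\im(\rho_{i,\lambda,\ol{s}})}^0$ is semisimple \emph{before} the arithmetic descent, via the group-cohomology tangent space computation of Proposition \ref{tangent space group cohomology}, and you say the descent ``requires'' this semisimplicity. Both claims are misplaced. Cohomological rigidity constrains deformations of $\rho$ as a $G$-local system, but it does not directly detect the internal structure of the Zariski closure of the image; there is no tangent-space argument in the paper (or that I can see) that rules out a central torus in the connected monodromy. More importantly, the logical order is reversed: the descent (Proposition \ref{descent}) does \emph{not} need semisimplicity. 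It uses only that $G$-irreducibility forces $\ol{\im(\rho_{i,\lambda,\ol{s}})}$ to be reductive with finite centralizer in $G$ (Richardson, plus $Z_G$ finite since $G$ is adjoint), and this suffices to make the descent cocycle $T(\sigma)$ land, after a finite base extension, in inner automorphisms. Semisimplicity of the identity component is then a \emph{consequence} of the descent: once the representation extends to $\pi_1^{\et,p'}(X_s,x_{\ol{s}})$ over the finite field $k$, Deligne's Weil II \cite[Corollaire 1.3.9]{deligne:weil2} applies and yields Corollary \ref{semisimple}. That semisimplicity is what licenses the application of Drinfeld's theorem on pro-semisimple completions in \S \ref{companionsection}. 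As written, your argument would leave the crucial semisimplicity unproved and its role misattributed; you should instead obtain the descent from $G$-irreducibility alone and then derive semisimplicity from Weil II.
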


\subsection{Algebraicity of the local system}

% We begin the proof of Theorem \ref{mainthm adjoint case} by recalling the setup. Let $X$ be a smooth connected quasi-projective complex variety, $x \in X$ a complex point, and $j \colon X \hookrightarrow \ol{X}$ a good compactification and $D = \ol{X}\setminus X$ the strict normal crossing divisor with $D = \cup_{i=1}^N D_i$ with $D_i$ the irreducible components. Let $U = \ol{X} \setminus D^\text{sing}$, so there is a factorization $j \colon X \xhookrightarrow{a} U \xhookrightarrow{b} \ol{X}$. For $i = 1,\ldots, N$, let $y_i \in D_i \cap U$ and $\Delta_i \subset U$ a small open ball around $y_i$. Choose $T_i \in \pi_1^\top(X,x)$ that generates the image of $\pi_1^\top(\Delta_i^\times) \to \pi_1^\top(X,x)$ (with the map depending on a choice of path from $x$ ending in $\Delta_i^\times = \Delta_i \setminus D_i \cap \Delta_i$).\spc{This whole paragraph occurred almost verbatim about a page ago; is there a way to avoid the redundancy that maintains clarity? Again fix $\Delta_i^\times$.}

Let $X$ be a smooth quasi-projective variety over $\CC$ and $(\ol{X}, U, D, \ldots)$ a compactification as in the first paragraph of \ref{back to geometry}. Let $G$ be a split simple adjoint group over $\ZZ$, and let $\rho_0 \colon \pi_1^\top(X,x) \to G(\CC)$ be a representation defining a $G$-local system on $X$ such that 
\begin{itemize}
    \item $\rho_0$ is $G$-cohomologically rigid, i.e.\ $H^1(U, a_\ast \ul{\fg}^\mr{der}) = 0$. 
    \item $\rho_0$ is $G$-irreducible. 
    \item The local monodromies (i.e.\ $\rho_0(T_1), \ldots, \rho_0(T_N)$ with $T_i$ a small enough loop around $D_i$ as in \ref{back to geometry}) are quasi-unipotent.
\end{itemize}

Richardson showed (\cite[Theorem 3.6, Theorem 4.1]{richardson:conjugacyntuple}, valid over fields of characteristic zero) that $G$-irreducibility of a closed subgroup $H \subset G$ is equivalent to $H$ being reductive with centralizer $C_G(H)$ that is finite modulo the center $Z_G$ of $G$ (in our case, simply finite). In particular, $\ol{\im(\rho_0)}$ enjoys these two properties.

%is a semisimple group (though not necessarily connected): see \cite[Theorem 3.4]{bhkt:fnfieldpotaut}. \spc{prove it (quote bhkt)}{\color{red} Fixed!}\spc{This isn't right. BHKT shows it is reductive. It seems that at the very end of the proof we can deduce semisimplicity, but not at this stage.}
% Choose generators $\gamma_1,\ldots, \gamma_n$ of $\pi_1(X)$ and let $g_i = \rho_0(\gamma_i)$. The $H$ is the theorem is $\ol{\im(\rho_0)}$, which is assumed to be $G$-irreducible. Then 
% Choose generators $\gamma_1,\ldots, \gamma_n$ of $\pi_1(X)$ and let $g_i = \rho_0(\gamma_i)$. The $H$ is the theorem is $\ol{\im(\rho_0)}$, which is assumed to be $G$-irreducible. Hence by part i of the theorem $H$ is reductive. By part ii of the theorem, $Z_G(H)/Z_G$ is finite. But $Z_G$ is finite since $G$ is simple, so $Z_G(H)$ is finite. Consequently $Z_H$ is finite, showing that $H$ is semisimple. 

%{\color{red}{Christian: There is blurring of the distinction between $G$ being reductive but $\rho$ having semisimple monodromy. Coh. rigid should mean the cohomology vanishes with coefficients in $\fg^{\mr{der}}$, not in $\fg$ (otherwise there will be a factor of $H^1(\ol{X}, j_{!*}K)=H^1(\ol{X}, K)$ which will be non-zero). Thus you have to ``fix the determinant." E-G build this in by defining the stacks with fixed determinant, and then considering a finite union of these where the order of the determinant is bounded. In particular, $S(G, h)$ is not finite until such a condition is imposed. Perhaps the easiest way to fix this is as follows. Insert somewhere the following reduction:}}

Our strategy, following \cite{esnault-groechenig:rigid}, to prove Theorem \ref{mainthm adjoint case} is to define a finite set $\mc{S}$ of $G$-local systems with $[\rho_0] \in \mc{S}$, and to prove that all elements of $\mc{S}$ are integral. For this, choose an integer $h \geq 1$ such that $\rho_0(T_i)$ is quasi-unipotent of index\footnote{The index of a quasi-unipotent element $g\in G(\CC)$ is the order of $g_s$ is the Jordan decomposition $g = g_sg_u$.} dividing $h$ for all $i = 1,\ldots, N$. Let $\mc{S} = \mc{S}(G,h)$ be the set of conjugacy classes of representations $\rho \colon \pi_1^\top(X,x) \to G(\CC)$ that are $G$-irreducible, $G$-cohomologically rigid, and such that each $\rho(T_i)$ is quasi-unipotent of index dividing $h$. 

\begin{lem}
The set $\mc{S}$ is finite, and there exist a number field $K \subset \CC$ and a finite set $\Sigma$ of finite places of $K$ such that each element of $\mc{S}$ is represented by some representation $\rho \colon \pi_1^\top(X,x) \to G(\mc{O}_{K, \Sigma})$.
\end{lem}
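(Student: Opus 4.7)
The strategy is to realize each $[\rho] \in \mc{S}$ as an isolated point of a finite-type moduli stack of the type built in Section~\ref{modulisection}, and then invoke $G$-cohomological rigidity for finiteness. Since $G$ is simple and adjoint (by Lemma~\ref{adreduction} we have reduced to this), the maximal abelian quotient $A$ is trivial and the character $\theta$ of \S\ref{back to geometry} is automatically trivial. Moreover, a quasi-unipotent element $g \in G(\CC)$ of index dividing $h$ has Jordan decomposition $g = g_s g_u$ where $g_s$ has order dividing $h$ and $g_u$ is unipotent commuting with $g_s$: up to conjugacy $g_s$ ranges over the finite set of $W$-orbits on the $h$-torsion of a maximal torus, and then $g_u$ ranges over the finite Dynkin--Kostant list of unipotent classes in $Z_G(g_s)$. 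Hence the set of quasi-unipotent conjugacy classes of index dividing $h$ in $G$ is finite, and every such class is defined over the number field $K_0 := \QQ(\zeta_h)$.

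For each $N$-tuple $\tau = (\mc{K}_1, \ldots, \mc{K}_N)$ of such classes, Proposition~\ref{moduli stack is algebraic} produces a finite-type algebraic stack
\[
\ul{M}_\tau := \ul{M}_G(\pi_1^\top(X,x), \mathbf{1}, \{(T_i, \mc{K}_i)\})
\]
over $K_0$. Each $[\rho] \in \mc{S}$ belongs to $\ul{M}_\tau(\CC)$ for some $\tau$, and by Proposition~\ref{tangent space} its Zariski tangent space equals $H^1(U, a_\ast \ul{\fg}^{\mr{der}})$, which vanishes by $G$-cohomological rigidity. Combined with $G$-irreducibility---which by Richardson's theorem implies the stabilizer of $\rho$ in $G^{\mr{ad}} = G$ is finite, so that $\ul{M}_\tau$ is Deligne--Mumford near $[\rho]$---this forces $[\rho]$ to be an \emph{isolated} closed geometric point of $\ul{M}_\tau$. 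But a finite-type algebraic stack over a field has only finitely many isolated geometric points (its reduced $0$-dimensional locus is a finite disjoint union of residue gerbes), and there are only finitely many tuples $\tau$; therefore $\mc{S}$ is finite.

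For the field of definition, each isolated geometric point of $\ul{M}_\tau$ is defined over a finite extension of $K_0$, and since $\mc{S}$ is finite we may fix a single number field $K$ over which every $[\rho] \in \mc{S}$ descends. A $K$-point of $\ul{M}_\tau$ is a priori a $G$-torsor $\mc{E}$ over $\Spec K$ together with a $G$-equivariant map to $\Rep_G$, so need not yield an honest homomorphism $\pi_1^\top(X,x) \to G(K)$; however each such torsor is trivialized over a finite Galois extension of $K$, and the finiteness of $\mc{S}$ lets us enlarge $K$ once more to trivialize all of them simultaneously. Each $[\rho] \in \mc{S}$ is then represented by a homomorphism $\rho \colon \pi_1^\top(X,x) \to G(K)$, and since $\pi_1^\top(X,x)$ is finitely generated, the image of each $\rho$ lies in $G(\mc{O}_{K, \Sigma_\rho})$ for some finite set $\Sigma_\rho$ of finite places of $K$; we take $\Sigma := \bigcup_{[\rho] \in \mc{S}} \Sigma_\rho$.

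The main technical point is the passage from ``vanishing stack tangent space'' to ``isolated closed point'', which rests on the deformation-theoretic interpretation of Proposition~\ref{tangent space} together with the finiteness of stabilizers provided by Richardson's theorem; the remaining steps are routine bookkeeping.
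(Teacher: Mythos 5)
Your proof is correct but diverges from the paper's in one substantive place. For the finiteness half, both you and the paper deduce it from the tangent-space vanishing of Proposition~\ref{tangent space}: the paper argues directly on $\Rep_G$, showing the $G^{\mr{ad}}$-orbit of $\rho$ is a whole connected component of a finite-type scheme, while you argue at the level of the stack via Deligne--Mumford-ness and Nakayama (the DM invocation, via Richardson, is a convenience rather than a necessity here --- the paper's orbit-in-$\Rep_G$ argument sidesteps it). The genuine difference is the field-of-definition step. The paper lets $\mr{Aut}(\CC/\ol{\QQ})$ act on the finite set $\mc{S}$ and invokes V.\ Lafforgue's pseudo-character theory: if $\rho$ were not $\ol{\QQ}$-rational, some pseudo-character value would be transcendental, yielding infinitely many pairwise inequivalent Galois twists inside $\mc{S}$, a contradiction. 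You instead appeal to the structure of the finite-type stack $\ul{M}_\tau$ over a number field: isolated points of $|\ul{M}_\tau|$ are closed with number-field residue fields, and a further finite extension produces an honest representation. This is more directly geometric and avoids pseudo-characters entirely, which is a genuine difference in method; but note that you compress two distinct descent obstructions into one: obtaining a $K$-point of $\ul{M}_\tau$ at all (i.e.\ neutralizing the residue gerbe at the isolated closed point) is logically prior to, and separate from, trivializing the $G^{\mr{ad}}$-torsor that you do address. Both obstructions vanish after a further finite extension in characteristic zero (the gerbe is banded by the finite \'etale automorphism group supplied by Richardson), so the argument goes through, but the gerbe-neutralization step deserves explicit acknowledgment.
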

\begin{proof}
There is a finite collection $\mc{K}_1,\ldots, \mc{K}_M \subset G(\CC)$ of conjugacy classes of quasi-unipotent elements of index dividing $h$. Indeed, one can choose a finite number of semisimple elements 
%$s_1,\ldots, s_n$ 
that represent any semisimple conjugacy class of elements of order dividing $h$ 
%(the elements $\mu(e^{2\pi i/h})$ as $\mu$ runs over representatives of $X_\ast(T)/hX_\ast(T)$ for 
(the $h$-torsion in $T(\CC)$, for $T$ a maximal torus of $G$ will do). The conjugacy classes with a fixed semisimple part $s$ correspond to unipotent conjugacy classes in $C_G(s)$. As $C_G(s)^0$ is connected reductive (since $s$ is semisimple), it has finitely many unipotent conjugacy classes, and \textit{a fortiori} the same then holds for $C_G(s)$ (all unipotent elements of $C_G(s)$ belong to $C_G(s)^0$).
%, which is connected by {\color{red} Insert reference}. By \cite{lusztig1976finiteness} \spc{Maybe not the best reference, the case of characteristic zero was proven much earlier.} there are finitely many such conjugacy classes. 

%\spc{prove it}{\color{red} Proof sketch above}. 
The $\mc{K}_i$ are the complex points of a locally closed algebraic subvariety of $G_K$ for some common number field $K$. By abuse of notation, we write $\mc{K}_i \subset G_K$ for this variety. 
For each $N$-tuple $J = (j_1,\ldots, j_N)$ with $1 \leq j_1, \ldots, j_N \leq M$,
%\spc{it said $j_1 \leq \ldots j_N$, which presumably is not what we want}{\color{red} Agree} 
let $\ul{M}_J = \ul{M}_G(\pi_1^\top(X,x), \{(T_i, \mc{K}_{j_i})\})$ be the moduli space defined in (\ref{defn of moduli}) of \S \ref{modulisection}, that is the collection of $G$-irreducible $G$-local systems on $X$ with monodromy around $D_i$ in $\mc{K}_{j_i}$. Note that since $G$ is adjoint, the condition fixing the projection $G \to A=\{1\}$ is vacuous. Let $\ul{N} = \cup_J \ul{M}_J$ as $J$ runs over the finite set of $N$-tuples as above. By Proposition \ref{tangent space} the set $\mc{S}(G,h)$ is contained in the isolated points of the stack $\ul{N}$, and since $\ul{N}$ is finite type by Proposition \ref{moduli stack is algebraic}, there are finitely many isolated points. 

More precisely, suppose $\rho \colon \pi_1^\top(X,x) \to G(\CC)$ has $\rho(T_i) \in \mc{K}_{j_i}$ for $i = 1,\ldots, N$. If $[\rho] \in \mc{S}(G,h)$ we claim that the image of the map $f = f_\rho \colon G_{\CC} \to \Rep_{G}(\pi_1^\top(X,x), \{T_i, \mc{K}_{j_i} \})$, given by $g \mapsto g\rho g^{-1}$, is a connected component. If not, $\mr{im}(f)$ is contained in some closed connected subspace $Z \supsetneq \mr{im}(f)$. Let $v \in T_{f(1)} Z \setminus T_{f(1)}\mr{im}(f_\rho)$. Then $v$ corresponds to a cocycle $\alpha_v \in Z^1(\pi_1^\top(X,x), \fg^{\mr{der}}(\CC))$, with action $\mr{Ad}\circ \rho$ on $\fg^{\mr{der}}(\CC)$. The cocycles from $T_{f(1)}\mr{im}(f)$ are precisely the coboundaries, so our choice of $v$, along with Proposition \ref{tangent space} shows that the cohomology class $[\alpha_v]$ is a non zero element of $H^1(U, a_\ast \ul{\fg^{\mr{der}}(\CC)})$. This contradicts the fact that $\rho$ is $G$-cohomologically rigid. Thus $\mr{im}(\rho)$ is a connected component. As $\Rep_{G}(\pi_1^\top(X,x), \{T_i, \mc{K}_{j_i} \})_{\CC}$ is finite type over $\CC$, there are finitely many connected components. We conclude that $\mc{S}(G,h)$ is finite as it injects into the set of connected components of finitely many representation varieties of the above type. 

%By Proposition \ref{tangent space} the set $\mc{S}(G,h)$ is contained in the isolated points of the stack $\ul{N}$, where a point $x \in \ul{N}(\CC)$ is isolated if $x \colon \Spec(\CC) \to \ul{N}$ is an open immersion. If $X \to \ul{N}$ is a smooth chart for $\ul{N}$, then $X \times_{\ul{N}} x \to X$ is an open immersion. I want to say that each point of $\mc{S}(G,h)$ is open and closed in $\ul{N}$. The inverse image of a point in $\mc{S}(G,h)$ in $X$ is a connected component, and since $X$ is finite type, there are finitely many connected components. \spc{Finish this later}. \spc{maybe say what this means?}{\color{red} meaning that the points represented by elements of $\mc{S}(G,h)$ in the underlying topological space $|\ul{N}|$ (see stacks tag 04XL) are isolated.}  By proposition \ref{moduli stack is algebraic}, $\ul{N}$ is finite type over $K$, and in particular, has only finitely many isolated points, thus proving that $\mc{S}$ is a finite set of some cardinality $\mathfrak{N}$.

The group $\mr{Aut}(\CC/\ol{\QQ})$ acts on the isomorphism classes of objects of $\ul{N}(\CC)$, and permutes the finite set $\mc{S}(G,h)$. We claim that if $\rho \colon \pi_1^\top(X,x) \to G(\CC)$ represents a $G$-local system $[\rho] \in \mc{S}(G,h)(\CC)$, then $\rho$ is defined over $\ol{\QQ}$ (and hence over a number field $K$ since $\pi_1^{\top}(X, x)$ is finitely-generated). This can be seen using the pseudo-character $\mr{tr}\, \rho = (\Theta_n)_{n \geq 1}$ of $\rho$ (for background on pseudo-characters, including the notation we use, see \cite[\S 4]{bhkt:fnfieldpotaut}; the notion is due to V. Lafforgue in \cite{vlafforgue}). Here each $\Theta_n$ is the map $\Theta_n \colon \ZZ[G^n]^G \to \mr{Map}(\pi_1^\top(X,x)^n, \CC)$ given by $\Theta_n(f)(\gamma_1,\ldots, \gamma_n) = f(\rho(\gamma_1),\ldots, \rho(\gamma_n))$. The pseudo-character of ${}^\sigma\rho$ is then $({}^\sigma\Theta_n(f))(\ul{\gamma}) = \sigma(\Theta_n(f)(\ul{\gamma}))$ for $\ul{\gamma} = (\gamma_1,\ldots, \gamma_n) \in \pi_1^\top(X,x)^n$. If $\rho$ cannot be defined over $\ol{\QQ}$, then by \cite[Proposition 11.7]{vlafforgue} (see also \cite[Theorem 4.5]{bhkt:fnfieldpotaut}) there is some $f \in \ZZ[G^n]^G$ and some $\ul{\gamma} \in \pi_1^\top(X,x)^n$ such that $\Theta_n(f)(\ul{\gamma})$ is transcendental over $\ol{\QQ}$. It follows that there is an infinite set $H \subset \mr{Aut}(\CC/\ol{\QQ})$ such that $({}^\sigma\Theta_n(f))(\ul{\gamma}) \neq ({}^\tau\Theta_n(f))(\ul{\gamma})$ for any $\sigma \neq \tau \in H$, and hence by \textit{loc. cit.}, ${}^\sigma \rho$ is not conjugate to ${}^\tau\rho$. Thus the set of conjugacy classes of $\{{}^\sigma\rho\}_{\sigma \in H}$ is infinite and contained in $\mc{S}(G,h)$ which contradicts finiteness of $\mc{S}(G,h)$. 

Choose representatives $\rho_i, i = 1,\ldots, \mathfrak{N}$ of the conjugacy classes in $\mc{S}(G,h)$. By the above discussion each $\rho_i$ is defined over a number field $K$. Enlarging $K$, we can assume each $\rho_i$ is defined over the same $K$. Since $\pi_1^\top(X,x)$ is finitely generated, each $\rho_i$ is defined over $\mc{O}_{K,\Sigma}$ for $\Sigma$ a finite set of finite places. Enlarging $\Sigma$, we may assume that $\rho_1, \ldots, \rho_{\mathfrak{N}}$ are all defined over $\mc{O}_{K,\Sigma}$.
\end{proof}

By the previous lemma we choose once and for all representations
    \[ \rho_i \colon \pi_1^{\mathrm{top}}(X,x) \to G(\cO_{K,\Sigma}), \qquad \qquad i = 1, \ldots, \mathfrak{N}, \]
which form a complete set of representatives for the conjugacy classes in $\mc{S}(G,h)$. 

%{\color{red}{Perhaps revise the above as follows. Maybe I'm worrying about nothing here? Having shown that our original $\rho$ is $G(\CC)$-conjugate to a homomorphism $\pi_1^{\top}(X, x) \to G(K) \subset G(\CC)$, we let $\iota \colon G_{\rho} \to G$ be the inclusion of the $K$-Zariski-closure of $\im(\rho)$ in $G(K)$. This inclusion gives us preferred embeddings $G_{\rho}(E) \to G(E)$ for any field $E$ containing $K$, and later we will apply this to any $K \subset \ol{\QQ} \subset \ol{\QQ}_{\lambda'}$, where $\ol{\QQ}$ is the algebraic closure of $K$ in $\CC$ and $\lambda'$ is any place of $\ol{\QQ}$. We then refine the definition of $\mc{S}(G, h)$ to $\mc{S}(G, h, \iota)$, the subset of $\rho' \in \mc{S}(G, h)$ such that up to $G(\CC)$-conjugacy the inclusion $G_{\rho'} \subset G$ is equal to the fixed inclusion $\iota$. We then let $\mathfrak{N}$ be order of $\mc{S}(G, h, \iota)$, we enumerate the elements of this set as $\rho_1, \ldots, \rho_{\mathfrak{N}}$, and we replace these by $G(\CC)$-conjugates factoring through $G(\mc{O}_{K, \Sigma})$ for some number field $K$ and finite set of primes $\Sigma$ of $K$.}}

Fix $\lambda \in \Spec(\cO_{K,\Sigma})$ a finite place. Consider the representation
    \[ \rho_{i,\lambda}^{\circ,\mathrm{top}} \colon \pi_1^{\mathrm{top}}(X,x) \to G(\cO_{K,\Sigma}) \to G(\cO_{K_\lambda}). \]
Composing with $G(\cO_{K_\lambda}) \to G(K_\lambda)$ we get a representation $\rho_{i,\lambda}^{\mathrm{top}}$ (and similarly we get representations $\rho_{i,\lambda'}^\top$ for $\lambda' \in \Sigma$). Since $G(\cO_{K_\lambda})$ is a profinite group, the representations $\rho_{i,\lambda}^{\circ,\mathrm{top}}, \rho_{i,\lambda}^\mathrm{top}$ of $\pi_1^{\mathrm{top}}(X,x)$ factor through $\pi_1^\et(X,x)$. We denote the factored representations by $\rho_{i,\lambda}^\circ$ and $\rho_{i,\lambda}$. 

The next step is to spread $X$ out over a scheme $S$ of finite type over $\Spec(\ZZ)$ and then to specialize to characteristic $p$ as in \cite{esnault-groechenig:rigid}. More precisely, there exists $S$ a connected regular scheme of finite type over $\Spec(\ZZ)$ with a complex generic point $\eta \colon \Spec(\CC) \to S$ such that $(j \colon X \hookrightarrow \ol{X}, x, D, D_i)$ is the fiber over $\eta$ of $(j_S \colon X_S \hookrightarrow \ol{X}_S, x_S, D_S, D_{i,S})$, where $\ol{X}_S$ is smooth and projective over $S$, $D_S = \cup_{i} D_{i,S} \subset \ol{X}_S$ is a relative strict normal crossings divisor whose complement is $X_S$, and $x_S \in X_S(S)$. If $s \in S(k)$ let $(j_{{s}} \colon X_{{s}} \to \ol{X}_{{s}}, x_{{s}}, D_{{s}}, D_{i,{s}})$ be the data over $k$ obtained as the base change of the corresponding data over $S$ via $s$. 

\begin{rmk}\label{Spreading out the curve}
Strictly speaking, for the proof of Proposition \ref{companion properties} we will need a little more: Fix a curve $\ol{C} \subset \ol{X}$ that is smooth and projective, does not meet $D^\mr{sing}$ and meets $D\setminus D^{\mr{sing}}$ transversely (for the existence of $\ol{C}$, see \cite[Theorem 6.3]{jouanolou:bertini}). We take $x \in \ol{C}$, and choose points $y_i \in D_i \cap \ol{C}$. We may (and do) choose $S$ such that $(j \colon X \hookrightarrow \ol{X}, x, D, D_i, \ol{C}, y_i)$ spreads out over $S$ to $(j_S \colon X_S \hookrightarrow \ol{X}_S, x_S, D_S, D_{i,S}, \ol{C}_S, y_{i,S})$ satisfying the above constraints on $j_S, \ol{X}_S, D_S, D_{i,S}$ and such that $\ol{C}_S$ is smooth and projective over $S$ and $C_S$ intersects $D_S$ transversely. Write $\ol{C}_s$ and $y_{i,s}$ for the fiber of $\ol{C}_S$ and $y_{i,S}$ over $s$. The reader may safely ignore the curve $\ol{C}$ until the proof of Proposition \ref{companion properties}. 
\end{rmk}

We now choose a finite field $k$ and a closed point $s \in S(k)$ such that $p = \mr{char}\, k$ is coprime to
\begin{itemize}
    \item the prime $\ell$ under $\lambda$;
    \item all places under $\Sigma$; 
    \item for each $i$, the cardinality of the image of the reduction 
    \[    
    \bar{\rho}^\circ_{i, \lambda} \colon \pi_1^\et(X,x) \to G(k(\lambda))
    \]
    of $\rho^\circ_{i, \lambda}$ to the residue field $k(\lambda)$ of $\cO_{K_{\lambda}}$; and
    \item the index of quasi-unipotence of the $\rho_i$. 
\end{itemize}

Given these choices, the $p$-part of $\pi_1^\et(X,x)$ is in the kernel of $\rho_{i,\lambda}^\circ$, hence $\rho_{i,\lambda}^\circ$ factors through the prime-to-$p$ fundamental group $\pi_1^\et(X,x) \to \pi_1^{\et,p'}(X,x)$. If $\ol{s}$ is a geometric point over $s$, there is a discrete valuation ring $R$ and a map $\Spec(R) \to S$ mapping the generic point of $\Spec(R)$ to $\eta$ and the special point to $\ol{s}$. % Stacks Tag 054F
By pulling back $X_S$ to $\Spec(R)$, there is a prime-to-$p$ specialization map\footnote{There is also a surjective tame specialization map $\pi_1^\et(X,x) = \pi_1^{\et,t}(X,x) \to \pi_1^{\et, t}(X_{\ol{s}},x_{\ol{s}})$ (see \cite[Corollary A.12]{lieblich-olsson-pi1}) which we will use later (particularly in the proof of Proposition \ref{companion properties}).} on prime-to-$p$ fundamental groups giving an isomorphism $\Sp \colon \pi_1^{\et,p'}(X,x) \xrightarrow{\sim} \pi_1^{\et,p'}(X_{\ol{s}}, x_{\ol{s}})$ (see \cite[Corollary A.12]{lieblich-olsson-pi1}, or originally \cite{sga1}). Thus we get representations
    \[ \rho_{i,\lambda,\ol{s}}^\circ \colon \pi_1^\et(X_{\ol{s}}, x_{\ol{s}}) \twoheadrightarrow \pi_1^{\et,p'}(X_{\ol{s}}, x_{\ol{s}}) \xrightarrow{\mr{Sp}^{-1}} \pi_1^{\et,p'}(X,x) \xrightarrow{\rho_{i,\lambda}^{\circ}} G(\cO_{K_\lambda}) \]
and again denote by
    \[ \rho_{i,\lambda, \ol{s}} \colon \pi_1^{\et,p'}(X_{\ol{s}}, x_{\ol{s}}) \to G(\cO_{K_\lambda}) \to G(K_{\lambda}) \]
the composites into $G(K_{\lambda})$.

\begin{rmk}\label{quasi-unipotent monodromy}
%The representations $\rho_{i,\lambda, \ol{s}}$ have quasi-unipotent local monodromy with index dividing $h$. Before precisely saying what this means, we introduce notation. An element $\gamma \in \pi_1^{\et, t}(X_{\ol{s}}, x_{\ol{s}})$ is said to be a pro-generator of tame inertia at $D_j$ if for any tamely ramified connected Galois cover $Y \to X_{\ol{s}}$ with group $G$, there is a valuation $w$ of the function field $k(Y)$ that extends the valuation associated to $D_j$ on $k(X_{\ol{s}})$ such that the image of $\gamma$ in $G$ generates $I_w \subset G$, the inertia subgroup of $w$. 
We say that $\rho_{i, \lambda, \ol{s}}$ has quasi-unipotent local monodromy with index dividing $h$ along $D_j$ ($j=1, \ldots, N$) if it maps any pro-generator of the tame inertia group at $D_j$ (i.e., the absolute Galois group of the fraction field of the strict henselization of the DVR $\mc{O}_{\ol{X}_{\bar{s}}, D_{j, \bar{s}}}$) to a quasi-unipotent element whose $h^{th}$ power is unipotent. This property is inherited from the corresponding property of $\rho_i$. Indeed, the monodromy of $\rho_i$ at $D_j$ is the same as the monodromy of $\rho_i|_{C}$ at $y_j$ for our chosen curve $C$ above. Similarly, the the monodromy of $\rho_{i,\lambda, \ol{s}}$ at $D_{j,\ol{s}}$ is the same as the monodromy of it's restriction $\rho_{i,\lambda,\ol{s}}|_{C_{\ol{s}}}$ to $C_{\ol{s}}$ at $y_{i,\ol{s}}$ by Abhyankar's lemma \cite[XIII Proposition 5.2]{sga1} (noting that these local systems are tamely ramified). Thus we reduce to showing that the monodromy of $\rho_{i,\lambda,\ol{s}}|_{C_{\ol{s}}}$ at $y_{i,\ol{s}}$ is quasi-unipotent of index dividing $h$, which follows from \cite[Lemma 3.2]{esnault-groechenig:rigid} (or \cite[XIV 1.1.10]{sga7.2}).
\end{rmk}

\subsection{Arithmetic descent}
We would like to say that the $G$-local system $[\rho_{i,\lambda,\ol{s}}]$ on $X_{\ol{s}}$ is the pullback of a local system on $X_{s}$, or equivalently that there exists a representation $\rho_{i,\lambda, s}$ making the diagram commute
\begin{center}
    \begin{tikzcd}
        \pi_1^\et(X_{\ol{s}}, x_{\ol{s}}) \arrow[rd, "\rho_{i,\lambda, \ol{s}}"] \arrow[d, hook] & \\
        \pi_1^\et(X_s, x_{\ol{s}}) \arrow[r, "\rho_{i,\lambda, s}"] & G(K_\lambda) 
    \end{tikzcd}
\end{center}
The next proposition (a variant on \cite[Proposition 3.1]{esnault-groechenig:rigid}, \cite[Theorem 4]{simpson:higgs}) says that this indeed can be arranged, after possibly replacing $k$ by a finite extension. 
%The same argument of lemma 3.2 should provide a point $s$ of $S$ with $k(s)$ finite, and an extension of $\rho_{i,\lambda,\ol{s}}$ to an arithmetic representation.
%\marginnote{\color{red}{What is the notation $x_s$? The base-point still needs to be a geometric point of $X_s$.}}
%    \[ \rho_{i,\lambda,s} \colon \pi_1^\et(X_s,x_s) \to G({K_\lambda}). \]
\begin{prop}\label{descent}
After replacing $k$ by a finite extension there is a representation 
    \[ \rho_{i,\lambda, s} \colon \pi_1^{\et, p'}(X_s, x_{\ol{s}}) \to G(\ol{\QQ}_\lambda) \]
whose restriction to $\pi_1^\et(X_{\ol{s}}, x_{\ol{s}})$ is $\rho_{i,\lambda,\ol{s}}$ and whose image has the same Zariski-closure as $\rho_{i, \lambda, \bar{s}}$.
\end{prop}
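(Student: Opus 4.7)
The plan is to extend each $\alpha := \rho_{i,\lambda,\ol s}$ from $\pi := \pi_1^{\et,p'}(X_{\ol s}, x_{\ol s})$ to $\tilde\pi := \pi_1^{\et,p'}(X_s, x_{\ol s})$ via the exact sequence $1 \to \pi \to \tilde\pi \to \Gamma \to 1$ with $\Gamma = \hat{\ZZ}^{(p')}$ topologically cyclic, while preserving the Zariski closure of the image.

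First I would show that after a finite base change of $k$, the Frobenius outer action on $\pi$ fixes each $G(\ol K_\lambda)$-conjugacy class $[\alpha_i]$. Fixing a lift $\tilde F \in \tilde\pi$ of Frobenius, form the twist $\alpha^F(\gamma) := \alpha(\tilde F \gamma \tilde F^{-1})$. This $\alpha^F$ inherits $G$-irreducibility, $G$-cohomological rigidity, and quasi-unipotent local monodromy of index dividing $h$, since each of these properties is preserved under composition with an outer automorphism of $\pi$ (cohomological rigidity going through because the relevant $H^1(\ol X, j_{!*}\ul{\fg}^{\mr{der}}) = 0$ condition is insensitive to such a twist). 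Via an embedding $\ol K_\lambda \hookrightarrow \CC$ and the specialization isomorphism $\mr{Sp} \colon \pi_1^{\et,p'}(X,x) \xrightarrow{\sim} \pi_1^{\et,p'}(X_{\ol s}, x_{\ol s})$, the class $[\alpha^F]$ corresponds to an element of the finite set $\mc{S}$. Hence Frobenius permutes $\mc{S}$, and after replacing $k$ by a finite extension we may assume Frobenius fixes every $[\alpha_i]$.

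Next I would construct the extension itself. With $[\alpha^F] = [\alpha]$ arranged, pick $g \in G(\ol K_\lambda)$ satisfying $\alpha(\tilde F \gamma \tilde F^{-1}) = g \alpha(\gamma) g^{-1}$ for all $\gamma \in \pi$; this $g$ is well-defined modulo the centralizer $Z := C_G(\alpha(\pi))(\ol K_\lambda)$, which is finite because $\alpha$ is $G$-irreducible and $Z_G = 1$. Tentatively set $\tilde\alpha(\gamma \tilde F^n) := \alpha(\gamma) g^n$. The obstruction to this extending to a continuous homomorphism on all of $\tilde\pi$ is a class in $H^2(\Gamma, Z)$, which vanishes after restriction to a suitable open subgroup of $\Gamma$ (a further finite extension of $k$), using that $\Gamma$ has profinite cohomological dimension $1$ and $Z$ is finite; one reduces the possibly non-abelian case to the abelian one by first passing to an extension trivializing the conjugation $\Gamma$-action on $Z$. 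For the Zariski-closure assertion, $g$ normalizes $H := \overline{\alpha(\pi)}^{\mr{Zar}}$ (since $g\alpha(\gamma)g^{-1} \in \alpha(\pi)$ for all $\gamma$), so $g \in N_G(H)$; by Richardson's theorem $H$ is reductive with finite centralizer in $G$, which forces $N_G(H)/H$ to be finite. The image of $g$ in this finite quotient encodes the Frobenius outer action on $H$, and a final finite base change of $k$ trivializes it; we then modify $g$ by an element of $Z$ to lie in $H$, so that $\mr{im}(\tilde\alpha) \subseteq H$ has Zariski closure equal to $H$.

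The main obstacle is the cohomological argument for continuity of the extension: producing an honest continuous homomorphism $\tilde\pi \to G(\ol K_\lambda)$ from the tentative formula requires that a $2$-cocycle in the finite (possibly non-abelian) group $Z$ be trivialized after finite base change. Reducing this to the standard vanishing of $H^2(\hat{\ZZ}, A)$ for finite abelian $A$ needs a two-step base change, and this must be coordinated with the base changes appearing in Step 1 (trivializing the Frobenius permutation of $\mc{S}$) and the Zariski-closure step (trivializing the Frobenius outer action on each $H_i$) so that a single finite extension $k'/k$ suffices. A secondary subtlety is the verification that $\alpha^F$ corresponds to a bona fide element of $\mc{S}$, which requires tracing $G$-cohomological rigidity through the Riemann--Hilbert and specialization identifications.
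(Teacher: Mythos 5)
Your overall strategy mirrors the paper's: extend $\rho_{i,\lambda,\ol s}$ along the split exact sequence $1 \to \pi_1^{\et,p'}(X_{\ol s}) \to \pi_1^{\et,p'}(X_s) \to \pi_1^\et(s,\ol s) \to 1$, after first arranging via a finite extension of $k$ that Frobenius fixes the conjugacy class, and then managing a lifting obstruction in an $H^2$ with finite coefficients. But two steps are not adequately justified.

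First, you deduce that Frobenius fixes $[\alpha]$ by placing $\alpha^F$ back into the finite set $\mc{S}$. This requires that $\alpha^F$ (pulled back through $\mr{Sp}$ and a field embedding) is $G$-cohomologically rigid, which you assert is ``insensitive to such a twist'' without proof; proving it means checking invariance of $H^1(\ol X, j_{!*}\ul{\fg}^{\mr{der}})$ under a Frobenius twist, transported across the Betti--\'etale comparison and specialization. The paper sidesteps this entirely: it shows $\sigma \mapsto [\rho^{\sigma,\top}_{i,\lambda}]$ defines a continuous map $\pi_1^\et(s,\ol s) \to \ul{N}(K_\lambda)$ (the moduli stack requires only $G$-irreducibility and quasi-unipotent local monodromy of bounded index, \emph{not} cohomological rigidity of the twists), and then uses that $[\rho^\top_{i,\lambda}]$ is an isolated point, so the map is locally constant. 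This is cleaner and avoids verifying rigidity of the twists.

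Second, and this is the genuine gap: you assert that ``by Richardson's theorem $H$ is reductive with finite centralizer in $G$, which forces $N_G(H)/H$ to be finite.'' Richardson's theorem (as cited in the paper, \cite[Theorem 3.6, Theorem 4.1]{richardson:conjugacyntuple}) gives reductivity of $H$ and finiteness of $C_G(H)/Z_G$; it says nothing about $N_G(H)/H$. The finiteness of $N_G(H)/H$ is in fact true in characteristic zero — one can show $\mr{Lie}(N_G(H)) = \mf{h} \oplus (\mf{m} \cap \mr{Lie}\,C_G(H))$ by decomposing $\fg = \mf{h} \oplus \mf{m}$ as an $H$-module and using that $\mr{Lie}\,C_G(H) = 0$ — but this is a separate argument, and it is precisely the point where the paper is most careful. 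The paper does \emph{not} invoke any such finiteness directly; instead it runs a Deligne-style argument (following \cite[Corollaire 1.3.9]{deligne:weil2}): after a finite base change, the conjugating element $T(\sigma)$ acts trivially on the characters of a faithful representation restricted to the central torus $Z^0_{G_{i,\lambda}^0}$, hence trivially on $Z^0$, and then one appeals to finiteness of the group of outer automorphisms of a reductive group trivial on the maximal central torus. You should either substitute the Lie-algebra argument for the misattributed citation, or follow the paper's outer-automorphism argument; as written the step does not stand.

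The remaining construction — extending via the cyclic quotient, killing the $H^2(\hat\ZZ, Z)$ obstruction by a finite base change, and then adjusting the extension on $\pi_1^\et(X_{\ol s})$ — is essentially equivalent to the paper's Hochschild--Serre argument, and is fine.
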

\begin{proof}

%We will start with our representation $\rho_{i,\lambda,\ol{s}} \colon \pi_1^\et(X_{\ol{s}}, x_{\ol{s}}) \to G(K_\lambda)$. We have our closed point $s \in S$ with $k(s)$ a finite field, and $\ol{s}$ a geometric point of $X_{\ol{s}}$ over $s$.
The point $x_s \in X_s$ is rational, so it splits the ``prime-to-$p$ homotopy sequence"
    \[ 1 \to \pi_1^{\et, p'}(X_{\ol{s}}, x_{\ol{s}}) \to \pi_1^{\et,p'}(X_s,x_s) \to \pi_1^{\et}(s, \ol{s}) \to 1, \]
which is by definition the pushout of the homotopy exact sequence for $\pi_1^{\et}(X_s, x_s)$ along $\pi_1^{\et}(X_{\bar{s}}, x_{\bar{s}}) \to \pi_1^{\et, p'}(X_{\bar{s}}, x_{\bar{s}})$.
For $\sigma \in \pi_1^\et(s,\ol{s})$ we also denote by $\sigma$ its image in $\pi_1^{\et, p'}(X_s, x_s)$ under the splitting from $x_s$. Let
    \[ \rho_{i,\lambda}^\sigma \colon \pi_1^\et(X,x) \to G(K_\lambda), \qquad \gamma \mapsto \rho_{i,\lambda,\ol{s}}(\sigma\cdot \mr{Sp}(\gamma)\cdot \sigma^{-1}), \]
where $\mr{Sp} \colon \pi_1^\et(X,x) \to \pi_1^{\et, p'}(X_{\ol{s}}, x_{\ol{s}})$ is the specialization map. We claim that the $G$-local system $[\rho_{i,\lambda}^{\sigma,\top}]$ represented by $\rho_{i,\lambda}^{\sigma,\top}$, the pullback of $\rho^\sigma_{i, \lambda}$ to $\pi_1^{\top}(X, x)$, has quasi-unipotent local monodromies of index dividing $h$ (i.e. $[\rho_{i,\lambda}^{\sigma, \top}] \in \ul{N}(K_\lambda)$). Indeed, the proof of \cite[Lemma 3.2]{esnault-groechenig:rigid} (which proves the claim when $G = \GL_n$) goes through verbatim for $G$, since the only thing that is used about $\GL_n$ is that the union of conjugacy classes of quasi-unipotent elements of index dividing $h$ is closed; and this is true for $G$ also. 

The map $\pi_1^\et(s,\ol{s}) \to \ul{N}(K_\lambda)$ given by $\sigma \mapsto \rho_{i,\lambda}^{\sigma,\top}$ is continuous for the profinite topology on $\pi_1^\et(s,\ol{s})$ and the $\lambda$-adic topology on $\ul{N}(K_{\lambda})$. The $G$-local system $[\rho_{i,\lambda}^\top]$ represented by $\rho_{i,\lambda}^\top$ is isolated, so there is an open subgroup $U$ with $[\rho_{i,\lambda}^{\sigma,\top}] = [\rho_{i,\lambda}^\top]$ for all $\sigma \in U$. Thus replacing $k$ by a finite extension, we may assume that for all $\sigma \in \pi_1^\et(s, \bar{s})$, $\rho_{i,\lambda}^{\sigma,\top}$ is $G(\ol{\QQ}_\lambda)$-conjugate to $\rho_{i,\lambda}^\top$ (we fix an extension of $\lambda$ to $\ol{\QQ}$). By specializing, we see that $\rho_{i,\lambda,\ol{s}}^\sigma = T(\sigma)\rho_{i,\lambda,\ol{s}}T(\sigma)^{-1}$ for some $T(\sigma) \in G(\ol{\QQ}_\lambda)$, where $\rho_{i,\lambda,\ol{s}}^\sigma(\gamma) = \rho_{i,\lambda,\ol{s}}(\sigma\gamma\sigma^{-1})$.

The element $T(\sigma)$ is determined up to multiplication by an element of the centralizer $C_G(\rho_{i,\lambda,\ol{s}})$. Note that $G$-irreducibility of $\rho_{i, \lambda, \bar{s}}$ implies that the geometric monodromy group $G_{i, \lambda}:= \ol{\im(\rho_{i, \lambda, \bar{s}})}$ is reductive, and that its centralizer $C_G(G_{i, \lambda})$ is finite (by \cite[Theorem 4.1]{richardson:conjugacyntuple}, using also that $G$ is semisimple, so has finite center). 
%since $\rho_{i, \lambda, \bar{s}}$ is $G$-irreducible (see for instance \cite[Theorem 3.4]{bhkt:fnfieldpotaut}) and $G$ is semisimple. 
$T(\sigma)$ normalizes $G_{i, \lambda}$, and it follows formally that $\sigma \mapsto T(\sigma)$ defines a homomorphism
\[
\pi_1^\et(s, \bar{s}) \to  N_G(G_{i, \lambda})/C_G(G_{i, \lambda}).
\]
We claim that, after replacing $k$ by a finite extension, we may assume $T(\sigma)$ is an inner automorphism of $G_{i, \lambda}$. We use part of the argument of \cite[Corollaire 1.3.9]{deligne:weil2}. Indeed, any automorphism of $G_{i, \lambda}$ induces, arguing successively, automorphisms of $G_{i, \lambda}^0$, $Z_{G_{i, \lambda}^0}$, and (to abbreviate) $Z^0=Z^0_{G_{i, \lambda}^0}$. Let $r \colon G \to GL_V$ be a faithful finite-dimensional representation, and consider the finite set of characters $X$ of $Z^0$ in $V$. Although $T(\sigma)$ is only defined up to $C_G(G_{i, \lambda})$, conjugating by any representative $T \in N_G(G_{i, \lambda})$ of $T(\sigma)$ shows that as $Z^0$-representations, $r|_{Z^0}$ and $r^T|_{Z^0}$ are isomorphic (as before, the notation means $r^T(z)= r(TzT^{-1})$; this is independent of the choice of representative $T$). Thus $T(\sigma)$ acts by permutations on the finite set $X$, and it follows that a finite power $T(\sigma)^n$ fixes $X$ point-wise. Faithfulness of the representation $r$ implies that these characters generate the character lattice $X^{\bullet}(Z^0)$, and therefore $T(\sigma)^n=T(\sigma^n)$ acts trivially on $Z^0$. The set of outer automorphism of a (not necessarily connected) reductive group that are trivial on the maximal central torus is finite, so enlarging $n$ we have that $T(\sigma^n)$ acts on $G_{i, \lambda}$ as an inner automorphism. Replacing $k$ by its degree $n$ extension, we then conclude that $\sigma \mapsto T(\sigma)$ factors 
\[
\pi_1^{\acute{e}t}(s, \bar{s}) \to G_{i, \lambda}\cdot C_G(G_{i, \lambda})/C_G(G_{i, \lambda}) \xrightarrow{\sim} G_{i, \lambda}/Z_{G_{i, \lambda}}.
\]
\iffalse
Projecting, we obtain a continuous homomorphism $\pi_1^\et(s, \bar{s}) \to  N_G(G_{i, \lambda})/ C_G(G_{i, \lambda}) G_{i, \lambda}$, and the target (a subgroup of $\mr{Out}(G_{i, \lambda})$) is a finite group by Lemma \ref{weyllemma} below \spc{there is a gap here, which may cause problems in the next section. centralizer of $G_{i, \lambda}$ being finite is not good enough to show $G_{i, \lambda}$ is semisimple; it would be okay if $G_{i, \lambda}$ were connected, but otherwise we have to worry about things like the normalizer of a maximal torus inside $\mr{PGL}_2$. This should not be possible in the motivic setting, but we need a way to handle it. i haven't had a chance to think through this yet. i'd guess that $N_G(G_i)/C_G(G_i)G_i$ is still finite, but it needs a different argument from just saying $G^0_i$ is semisimple. \cite[Proposition 5.12]{bhkt:fnfieldpotaut} seems to make this same mistake.}
%since $G^0_{i, \lambda}$ is semisimple. 
Thus by enlarging $k(s)$ we may assume each $T(\sigma)$ lies in $C_G(G_{i, \lambda})G_{i, \lambda}$, and $T$ defines a homomorphism $\ol{T}$ to $Z_{G_{i, \lambda}} \backslash G_{i, \lambda}$. Note that $G_{i, \lambda}$ is not assumed connected, so this quotient may not equal the adjoint quotient of $G_{i, \lambda}^{0}$. 
\fi

We then obtain a ``projective" representation
\[
\mathbb{P}(\rho_{i, \lambda, s}) \colon \pi_1^{\et, p'}(X_s, x_{\bar{s}}) \to G_{i, \lambda}/Z_{G_{i, \lambda}}(\ol{\QQ}_{\lambda})
\]
whose restriction to $\pi_1^{\acute{e}t}(X_{\bar{s}}, \bar{s})$ is $\mathbb{P}(\rho_{i, \lambda, \bar{s}})$ as follows: the rational point $x_s$ induces a semi-direct product decomposition $\pi_1^{\acute{e}t}(X_{s}, s) \cong \pi_1^{\acute{e}t}(X_{\bar{s}}, \bar{s}) \rtimes \pi_1^{\acute{e}t}(s, \bar{s})$, and then we set 
\[
\mathbb{P}(\rho_{i, \lambda, s})(g, \sigma)= \mathbb{P}(\rho_{i, \lambda, \bar{s}})(g) \cdot T(\sigma).
\]

The obstruction to lifting $\mathbb{P}(\rho_{i, \lambda, s})$ to a $G_{i, \lambda}$-valued representation lies in $H^2(\pi_1^{\et, p'}(X_s, x_{\bar{s}}), Z_{G_{i, \lambda}})$, which we analyze using the Hochschild-Serre spectral sequence. The key observation is that the existence of $\rho_{i, \lambda, \bar{s}}$ to begin with implies that the obstruction vanishes after restriction to $\pi_1^{\et, p'}(X_{\bar{s}}, x_{\bar{s}})$. Since $k$ has cohomological dimension 1, vanishing of the full obstruction follows after possibly enlarging $k$ once more to kill a class in $H^1(\pi_1^\et(s, \bar{s}), H^1(\pi_1^{\et, p'}(X_{\bar{s}}, x_{\bar{s}}), Z_{G_{i, \lambda}}))$. Finally, by once more enlarging $k$ we may assume that our lift
\[
\rho_{i, \lambda, s} \colon \pi_1^{\et, p'}(X_s, x_{\bar{s}}) \to G_{i, \lambda}(\ol{\QQ}_{\lambda})
\]
restricts to the given $\rho_{i, \lambda, \bar{s}}$: the two differ by an element of the finite group $H^1(\pi_1^{\et, p'}(X_{\bar{s}}, x_{\bar{s}}), Z_{G_{i, \lambda}})$, and after enlarging $k$ any such homomorphism extends to $\pi_1^{\et, p'}(X_s, x_{\bar{s}})$ (the outer-action of $\pi_1^{\et}(s, \bar{s})$ on this $H^1$ trivializes after a finite restriction, and then we apply the inf-res sequence). By construction, this descended representation has image with Zariski closure equal to $G_{i, \lambda}$.
\end{proof}
\iffalse
We now give the promised lemma that completes the proof of the proposition:
\begin{lem}
Let $H$ be a closed subgroup of $G$, with both $H$ and $G$ reductive (but not necessarily connected) algebraic groups over ?. Then $N_G(H)/HC_G(H)$ is finite. 
\end{lem}
\begin{proof}
(Not done yet.) The identity component $H^0$ of $H$ is a product $Z_{H^0}\cdot H^{\mr{der}}$, where $H^{\mr{der}}$ denotes the semisimple connected derived group of $H^0$. Any automorphism of $H$ preserves $H^0$, $Z_{H^0}$, and $H^{\mr{der}}$, and so we have a homomorphism $N_G(H)/H^0 \to \mr{Out}(H^{\mr{der}}) \times \mr{Out}(Z_{H^0})$ with finite image: indeed, $\mr{Out}(H^{\mr{der}})$ is finite, being the outer automorphism group of a semisimple group, and $\mr{Out}(Z_{H^0})$ is discrete, while the linear algebraic group $N_G(H)$ has finite image in any discrete group.
\end{proof}
\fi
Having established the arithmetic descent, we deduce the following important consequence:
\begin{cor}\label{semisimple}
The identity component $G_{i, \lambda}^0$ is semisimple.
\end{cor}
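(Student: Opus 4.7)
The plan is to show $G_{i,\lambda}^0$ is semisimple. Since $\rho_{i,\lambda,\ol{s}}$ inherits $G$-irreducibility from $\rho_0$ through the prime-to-$p$ specialization isomorphism, Richardson's theorem implies $G_{i,\lambda}$ is reductive, and therefore $G_{i,\lambda}^0$ is connected reductive. Semisimplicity of $G_{i,\lambda}^0$ is then equivalent to the vanishing of the torus $T := G_{i,\lambda}^0/[G_{i,\lambda}^0, G_{i,\lambda}^0]$.

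The derived group $[G_{i,\lambda}^0, G_{i,\lambda}^0]$ is characteristic in $G_{i,\lambda}^0$, hence normal in $G_{i,\lambda}$, so there is a well-defined quotient $q \colon G_{i,\lambda} \to G_{i,\lambda}/[G_{i,\lambda}^0, G_{i,\lambda}^0]$, which is an extension of the finite component group $F = G_{i,\lambda}/G_{i,\lambda}^0$ by $T$. Let $H \subset \pi_1^{\et,p'}(X_s, x_{\ol{s}})$ be the preimage of $T(\ol{\QQ}_\lambda)$ under $q \circ \rho_{i,\lambda,s}$; this is a finite-index subgroup corresponding, after possibly enlarging $k$, to a connected étale cover $X''_s \to X_s$. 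Write $\bar\rho' \colon H \to T(\ol{\QQ}_\lambda)$ for the induced character. By the conclusion of Proposition \ref{descent}, the image of $\rho_{i,\lambda,\ol{s}}$ is Zariski-dense in $G_{i,\lambda}$, so the image of $\bar\rho'$ is Zariski-dense in $T$; and by Remark \ref{quasi-unipotent monodromy}, its local monodromies around the boundary divisors of a good compactification of $X''_s$ are of finite order.

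Choosing a basis of characters $\chi_1, \ldots, \chi_r$ of $T$, each $\chi_j \circ \bar\rho'$ is a rank-one lisse $\ol{\QQ}_\lambda$-sheaf on $X''_s$ that is tamely ramified with roots-of-unity local monodromies. I would apply Deligne's purity theorem (Weil II): after twisting by an appropriate integer power of the cyclotomic character, which acts trivially on the geometric fundamental group, each $\chi_j \circ \bar\rho'$ becomes pure of weight zero. Its Frobenius values are then algebraic numbers of absolute value one in every complex embedding, so they are roots of unity by Kronecker's theorem; Chebotarev density then forces the image of $\chi_j \circ \bar\rho'$ on the arithmetic fundamental group to be finite. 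Hence the geometric image of $\bar\rho'$ is finite in $T$, and combined with Zariski-density this yields $T = \{1\}$, so $G_{i,\lambda}^0$ is semisimple.

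The main obstacle is the weight-theoretic finiteness step: executing Weil II together with a Kronecker--Chebotarev argument to pass from ``pure of weight zero after a cyclotomic twist'' to ``finite geometric image.'' While standard in the $\GL_n$ setting, this requires care in the normalization of weights; in more pathological cases (e.g.\ when $X_{\ol{s}}$ has large abelianized étale $H^1$), one may need to invoke $G$-cohomological rigidity of $\rho_0$ more directly, since the mere existence of an arithmetic extension with quasi-unipotent local monodromy does not by itself force a rank-one character to have finite geometric image.
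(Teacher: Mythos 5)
Your reduction of the corollary to triviality of the torus $T = G_{i,\lambda}^0/[G_{i,\lambda}^0,G_{i,\lambda}^0]$ is morally the same move the paper makes, because the paper simply cites \cite[Corollaire 1.3.9]{deligne:weil2}, whose content is precisely that the radical of the identity component of the \emph{geometric} monodromy group of any lisse $\overline{\QQ}_\ell$-sheaf on a normal connected finite-type $\mathbb{F}_q$-scheme is unipotent (combined with reductivity of $G_{i,\lambda}$ from Richardson, this kills the torus). The observation, also in the paper, that $G_{i,\lambda}^0$ is both the connected arithmetic and connected geometric monodromy group (this follows from the construction in Proposition~\ref{descent}) is what licenses the citation. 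Your argument is essentially reconstructing the proof of 1.3.9 by hand, but you veer off at the decisive step.

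The decisive step is where the gap lies. You assert that each rank-one character $\chi_j\circ\bar\rho'$ "becomes pure of weight zero after twisting by an appropriate integer power of the cyclotomic character." This is not justified and does not hold a priori: an arbitrary continuous $\overline{\QQ}_\lambda^\times$-valued character of $\pi_1^{\et}$ need not be $\iota$-pure of any weight, and the fact that it becomes so after a Tate twist is essentially equivalent to the result you are trying to prove. What you actually want is \cite[Th\'eor\`eme 1.3.1]{deligne:weil2} and its corollaries 1.3.2--1.3.4: on a normal connected finite-type scheme over $\mathbb{F}_q$, \emph{every} rank-one lisse $\overline{\QQ}_\ell$-sheaf has a finite tensor power pulled back from $\Spec(\mathbb{F}_q)$, hence finite geometric monodromy. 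No purity hypothesis, no quasi-unipotence at infinity, and no appeal to Kronecker or Chebotarev is needed. Your closing worry---that "the mere existence of an arithmetic extension with quasi-unipotent local monodromy does not by itself force a rank-one character to have finite geometric image," and that one might need to invoke cohomological rigidity again---is unfounded: the arithmetic extension alone does force finiteness of the geometric image, by 1.3.1. Replace the Weil~II-purity/Kronecker paragraph with a citation of \cite[1.3.1--1.3.4]{deligne:weil2}, take $T$ to be the torus quotient of the connected geometric monodromy group (and note, as the paper does, that this equals $G_{i,\lambda}^0$), and your argument becomes a correct, if more verbose, version of the paper's one-line deduction from Corollaire~1.3.9.
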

\begin{proof}
The descent $\rho_{i, \lambda, s}$ has (not necessarily connected) reductive monodromy group $G_{i, \lambda}$. It follows from \cite[Corollaire 1.3.9]{deligne:weil2} that $G_{i, \lambda}^0$ is semisimple, since it is also the identity component of the geometric monodromy group. 
\end{proof}

\section{Constructing Companions}\label{companionsection}
\subsection{Application of results of Drinfeld}

We have for each $i$ the representations 
\[
\rho_{i, \lambda, s} \colon \pi_1^\et(X_s, x_{\bar{s}}) \to G(K_{\lambda}) \subset G(\ol{\QQ}_{\lambda}),
\]
whose images have Zariski-closures equal to the semisimple (not necessarily connected) subgroups $G_{i, \lambda} \subset G$ over $\ol{\QQ}_{\lambda}$; recall that by construction $\mr{im}(\rho_{i, \lambda, s})$ has the same Zariski-closure as $\mr{im}(\rho_{i, \lambda, \bar{s}})$. Also, since $i$ ranges over a finite set, we may assume that the same $s$ works for all $i$. For any place $\lambda'$ of $\ol{\QQ}$ not above $p$,
%, and any isomorphism $\iota \colon \ol{\QQ}_{\lambda} \xrightarrow{\sim} \ol{\QQ}_{\lambda'}$, 
we will construct a $\lambda'$-companion $\rho_{i, \lambda', s}$ with ``the same" algebraic monodromy group as $\rho_{i, \lambda, s}$. Let $(G_i, \phi_{\lambda})$ be a pair consisting of a reductive group $G_i$ over $\ol{\QQ}$ and an isomorphism $G_i \otimes_{\ol{\QQ}} \ol{\QQ}_{\lambda} \xrightarrow{\phi_{\lambda}} G_{i, \lambda}$. We also fix an embedding of groups over $\ol{\QQ}$, $\alpha \colon G_i \to G_{\ol{\QQ}}$, such that $(\alpha \otimes \ol{\QQ}_{\lambda}) \circ \phi_{\lambda}^{-1}$ is $G(\ol{\QQ}_{\lambda})$-conjugate to the given $G_{i, \lambda} \subset G$.

We recall the main theorem of \cite{drinfeld:pross}. For a characteristic zero field $E$, let $\Pross(E)$ denote the groupoid whose objects are pro-semisimple (not necessarily connected) group schemes over $E$, and whose morphisms are group scheme isomorphisms modulo conjugation by the connected component of the identity. There is an extension of scalars equivalence of categories $\Pross(\ol{\QQ}) \xrightarrow{\sim} \Pross(\ol{\QQ}_{\lambda})$ when $\lambda$ is any place of $\ol{\QQ}$ (see \cite[Proposition 2.2.5]{drinfeld:pross} for details). To the smooth variety $X$ over the finite field $k$, Drinfeld associates for each $\lambda$ an object of $\Pross(\ol{\QQ})$ as follows: consider the inverse limit $(\hat{\Pi}_{\lambda}, r_{\lambda})$ over all pairs $(H, r)$ consisting of a semisimple group $H$ over $\ol{\QQ}_{\lambda}$ and a continuous homomorphism with Zariski-dense image $r \colon \pi_1^{\et}(X_s, x_{\bar{s}}) \to H(\ol{\QQ}_{\lambda})$. Via the above equivalence, we descend $\hat{\Pi}_{\lambda}$ to an object $\hat{\Pi}_{(\lambda)}$ of $\Pross(\ol{\QQ})$. If we let $\Pi_{\mr{Fr}}$ denote the set of all Frobenius elements in $\pi_1^{\et}(X_s, x_{\bar{s}})$ as in \cite[\S 1.1.2]{drinfeld:pross}, then the work of L. Lafforgue (\cite[Proposition VII.7]{llafforgue:chtoucas} implies that the universal homomorphism $r_{\lambda}$ induces a diagram of sets
\begin{equation}\label{frobdiagram}
\Pi_{\mr{Fr}} \to (\hat{\Pi}_{(\lambda)} \sslash \hat{\Pi}^0_{(\lambda)})(\ol{\QQ}) \to \pi_1^\et(X_s, x_{\bar{s}}).
\end{equation}
(Note that the GIT quotient, which as in \cite{drinfeld:pross} we will also denote by $[\hat{\Pi}_{(\lambda)}]$, is actually well-defined, even though $\hat{\Pi}_{(\lambda)}$ is only defined up to conjugation by the connected component.) 

The main theorem of \cite{drinfeld:pross} is then:
\begin{thm}[\cite{drinfeld:pross}]\label{drinfeld}
For any two non-archimedean places $\lambda$ and $\lambda'$ of $\ol{\QQ}$ not above $p$, there is a unique isomorphism $\hat{\Pi}_{(\lambda)} \xrightarrow{\sim} \hat{\Pi}_{(\lambda')}$ in $\Pross(\ol{\QQ})$ carrying diagram (\ref{frobdiagram}) for $\lambda$ to the analogous diagram for $\lambda'$.
\end{thm}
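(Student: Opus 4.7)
The plan is to prove uniqueness first, then construct the isomorphism by invoking L.\ Lafforgue's companions theorem for $\mr{GL}_n$ and organizing the resulting data via Tannakian reconstruction.

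\textbf{Uniqueness.} By Chebotarev density, the image of $\Pi_{\mr{Fr}}$ in $\pi_1^\et(X_s, x_{\bar{s}})$ is dense, and the universal property of $(\hat{\Pi}_{\lambda}, r_{\lambda})$ implies that its image in the coarse GIT quotient $[\hat{\Pi}_{(\lambda)}](\ol{\QQ})$ is Zariski-dense. Since morphisms in $\Pross(\ol{\QQ})$ are defined only modulo conjugation by the connected component, an equivalence class of morphisms is detected by its induced map on GIT quotients. Compatibility with the Frobenius diagram (\ref{frobdiagram}) therefore pins down the candidate isomorphism on the Zariski-dense set $\Pi_{\mr{Fr}}$, forcing uniqueness.

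\textbf{Existence.} I would write $\hat{\Pi}_{(\lambda)}$ as the inverse limit of its semisimple quotients, each coming from a Zariski-dense continuous homomorphism $r \colon \pi_1^\et(X_s, x_{\bar{s}}) \to H(\ol{\QQ}_{\lambda})$ with $H$ semisimple. For each faithful finite-dimensional representation $V$ of $H$, the composite $\pi_1^\et(X_s, x_{\bar{s}}) \to \mr{GL}(V)$ is a semisimple continuous $\lambda$-adic representation (semisimplicity is automatic because its Zariski closure, a quotient of $H$, is semisimple). By \cite{llafforgue:chtoucas} combined with the promotion arguments of \cite{drinfeld:deligneconj}, this representation admits a $\lambda'$-companion $V'$ on $X_s$ with matching Frobenius characteristic polynomials, uniquely determined up to isomorphism by the semisimplicity assumption. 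The task is then to organize the assignment $V \mapsto V'$ into an exact tensor functor $\Rep_{\ol{\QQ}_{\lambda}}(H) \to \Rep_{\ol{\QQ}_{\lambda'}}(\pi_1^\et(X_s, x_{\bar{s}}))$. Tannakian reconstruction applied to the essential image of this functor then yields a pro-semisimple group over $\ol{\QQ}_{\lambda'}$ equipped with a universal homomorphism from $\pi_1^\et(X_s, x_{\bar{s}})$; descending via the equivalence $\Pross(\ol{\QQ}_{\lambda'}) \simeq \Pross(\ol{\QQ})$ and taking the inverse limit over $H$ produces $\hat{\Pi}_{(\lambda')}$ together with the desired isomorphism. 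Compatibility with the Frobenius diagrams is built into the construction, since the underlying characteristic polynomials at Frobenius elements match by the defining property of companions.

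\textbf{Main obstacle.} The hardest point is verifying the tensor compatibility of $V \mapsto V'$: Lafforgue's theorem only directly furnishes companions of individual representations, so showing that the companion operation commutes with direct sums, tensor products, and duals, and in particular preserves semisimplicity, is genuinely nontrivial. The strategy I would follow, due to Drinfeld, is to reduce to smooth curves $C \subset X_s$ via Bertini-type arguments where Lafforgue's full correspondence applies, use pseudo-characters to characterize semisimple representations by their Frobenius traces, and exploit that pseudo-characters compose multiplicatively under tensor product so that they automatically match on both sides. A secondary subtlety is ensuring that companions of restrictions to sufficiently many curves glue to a companion on $X_s$ itself, which requires tracking $E$-compatibility carefully along the Lefschetz-style family of curves through a given point and applying the rigidity of semisimple representations to piece them together.
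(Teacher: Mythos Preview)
The paper does not prove this theorem at all: it is quoted verbatim as the main theorem of \cite{drinfeld:pross} and used as a black box. There is nothing in the paper to compare your proposal against; the authors simply invoke Drinfeld's result and proceed to apply it.

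Your proposal is thus not a proof of anything in the present paper but rather a sketch of how Drinfeld's own argument goes. As such it is broadly accurate in spirit---uniqueness via Chebotarev, existence by promoting Lafforgue's $\mr{GL}_n$ companions from curves to higher-dimensional $X_s$ via \cite{drinfeld:deligneconj}, and then organizing the result group-theoretically---but it understates the work involved. Drinfeld does not literally run a Tannakian reconstruction on a tensor functor $V \mapsto V'$; rather, he builds the isomorphism directly at the level of pro-semisimple groups using the structure theory developed in \cite{drinfeld:pross} (the classification of objects of $\Pross(\ol{\QQ})$ by root data with Galois-type decorations, and the key rigidity result that isomorphisms in $\Pross$ are determined by their effect on the GIT quotient). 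The ``tensor compatibility'' obstacle you flag is real, but Drinfeld circumvents rather than confronts it by working with the coarse quotient $[\hat{\Pi}]$ and the combinatorial data of the pro-semisimple group, not with categories of representations. If you were asked to supply a proof here, the correct answer is simply to cite \cite{drinfeld:pross}.
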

We now return to our representations $\rho_{i, \lambda, s}$. By construction and \cite[Proposition 2.3.3]{drinfeld:pross}, $\rho_{i, \lambda, s}$ induces a surjection of (pro-)semisimple groups over $\ol{\QQ}$, $\hat{\Pi}_{(\lambda)} \to G_i$, well-defined up to $G_i^0$-conjugation. 

Composing with the inverse of the isomorphism in Theorem \ref{drinfeld} and extending scalars $\ol{\QQ} \to \ol{\QQ}_{\lambda'}$, we obtain a surjection $\hat{\Pi}_{\lambda'} \to G_{i, \lambda'}$, where we abbreviate $G_{i, \lambda'}= G_i \otimes_{\ol{\QQ}} \ol{\QQ}_{\lambda'}$; this map is also defined up to $G^0_{i, \ol{\QQ}_{\lambda'}}$-conjugation. This furnishes us with the desired companion, namely the composite
 \begin{center}
    \begin{tikzcd}
\pi_1^{\et}(X_s, x_{\bar{s}}) \arrow[r,"{r_{\lambda'}}"] \arrow[rr, bend left, "\rho_{i, \lambda', s}"] & \hat{\Pi}_{\lambda'}(\ol{\QQ}_{\lambda'}) \arrow[r] & G_{i, \lambda'}(\ol{\QQ}_{\lambda'}).
    \end{tikzcd}
\end{center}
It is evident from the construction that the Zariski-closure $\ol{\im(\rho_{i, \lambda', s})}$ is equal to $G_{i, \lambda'}$, and that $\rho_{i, \lambda, s}$ and $\rho_{i, \lambda', s}$ are everywhere locally compatible (in the sense made precise by diagram (\ref{frobdiagram})).
\begin{prop}\label{companion properties}
 As $i=1, \ldots, \mathfrak{N}$ varies, the representations $\rho_{i, \lambda', \bar{s}} \colon \pi_1^\et(X_{\bar{s}}, x_{\bar{s}}) \to G_{i, \lambda'}(\ol{\QQ}_{\lambda'})$
 \begin{enumerate}
     \item are tamely ramified;
     \item are $G$-cohomologically rigid;
     \item are pair-wise distinct as $G(\ol{\QQ}_{\lambda'})$-conjugacy classes; 
     %({\color{red}{Have to be careful here: in just what sense do we need? It will depend on just how the set $\mc{S}(G, h)$ was defined, where we may for instance want to specify the isomorphism class of the monodromy group in that set. To do!}}); 
     and
     \item have quasi-unipotent local monodromies with index dividing $h$ (in the sense of Remark \ref{quasi-unipotent monodromy}). 
 \end{enumerate}
\end{prop}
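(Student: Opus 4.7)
I would derive all four properties by transferring them from the already-established properties of the original $\rho_i$ and the $\lambda$-adic specializations $\rho_{i, \lambda, \ol{s}}$, using in an essential way the local compatibility at closed points of $X_s$ encoded in diagram \eqref{frobdiagram} and Theorem \ref{drinfeld}. Two auxiliary devices play a central role: the curve $\ol{C} \subset \ol{X}$ of Remark \ref{Spreading out the curve}, which reduces ramification and local monodromy questions on $X_{\ol{s}}$ to a one-dimensional setting via Abhyankar's lemma (exactly as in Remark \ref{quasi-unipotent monodromy}), and L.~Lafforgue's Langlands correspondence on curves, which underlies the consistency of $\rho_{i, \lambda, \ol{s}}|_{\ol{C}_{\ol{s}}}$ and $\rho_{i, \lambda', \ol{s}}|_{\ol{C}_{\ol{s}}}$. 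The hardest step should be the cohomological rigidity (2), which requires an $\ell$-independence input for middle-extension cohomology.

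For (1) and (4), I would restrict both $\rho_{i, \lambda, \ol{s}}$ and $\rho_{i, \lambda', \ol{s}}$ to $\ol{C}_{\ol{s}}$. Because these restrictions are companions on the smooth projective curve $\ol{C}_{\ol{s}}$, L.~Lafforgue's theorem yields a matching of their local inertia eigenvalues at each $y_{j, \ol{s}} \in \ol{C}_{\ol{s}} \cap D_{j, \ol{s}}$ through a common embedding into $\ol{\QQ}$. Since $\rho_{i, \lambda, \ol{s}}|_{\ol{C}_{\ol{s}}}$ is tame at $y_{j, \ol{s}}$ with quasi-unipotent local monodromy of index dividing $h$ (by Remark \ref{quasi-unipotent monodromy}), the same holds for $\rho_{i, \lambda', \ol{s}}|_{\ol{C}_{\ol{s}}}$. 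Abhyankar's lemma, applied as in Remark \ref{quasi-unipotent monodromy}, then upgrades these local properties to global tameness of $\rho_{i, \lambda', \ol{s}}$ on $X_{\ol{s}}$ and to quasi-unipotent monodromy of index dividing $h$ around each $D_{j, \ol{s}}$.

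For (2), the plan is to factor the transfer through $\lambda$. First, Artin's comparison and proper/smooth base change across the spread $\ol{X}_S \to S$ convert the original cohomological rigidity $H^1(\ol{X}, j_{!*}\ul{\fg^{\mr{der}}}_{\rho_i}) = 0$ into the analogous $\lambda$-adic \'etale statement $H^1(\ol{X}_{\ol{s}}, j_{!*}\ul{\fg^{\mr{der}}}_{\rho_{i, \lambda, \ol{s}}}) = 0$. To go from $\lambda$ to $\lambda'$, note that the adjoint local systems on $X_{\ol{s}}$ are themselves companions of one another, since both arise by composing the common $G_i$-valued representation supplied by Theorem \ref{drinfeld} with the adjoint action of $G_i$ on $\fg^{\mr{der}}$. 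Thanks to (1) and (4), the relevant lisse sheaves are, after an appropriate Tate twist accounting for the weights of the unipotent parts of local monodromy, pure, and I would invoke $\ell$-independence of dimensions of cohomology of middle extensions of pure companion sheaves on smooth varieties over finite fields, a consequence of Deligne's purity theorem together with the weight-preservation of intermediate extensions. The resulting $\lambda'$-adic vanishing is exactly the desired cohomological rigidity. Carefully handling the unipotent parts and weight shifts here is the main obstacle.

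For (3), suppose for contradiction that $\rho_{i, \lambda', \ol{s}}$ and $\rho_{j, \lambda', \ol{s}}$ are $G(\ol{\QQ}_{\lambda'})$-conjugate for some $i \neq j$. Their pseudo-characters in the sense of \cite[\S 4]{bhkt:fnfieldpotaut} and \cite[Proposition 11.7]{vlafforgue} then agree, so the semisimple parts of their Frobenius images coincide at each closed point of $X_s$. Diagram \eqref{frobdiagram} and the canonical isomorphism of Theorem \ref{drinfeld} then force the same equality of Frobenius conjugacy classes on the $\lambda$-side, and Chebotarev density combined with $G$-irreducibility promotes this equality of pseudo-characters to $G(\ol{\QQ}_\lambda)$-conjugacy of $\rho_{i, \lambda, \ol{s}}$ and $\rho_{j, \lambda, \ol{s}}$. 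Lifting the resulting conjugating element through the specialization isomorphism $\mathrm{Sp} \colon \pi_1^{\et, p'}(X, x) \xrightarrow{\sim} \pi_1^{\et, p'}(X_{\ol{s}}, x_{\ol{s}})$ and then through $\pi_1^{\top}(X, x) \to \pi_1^{\et}(X, x)$ shows that $\rho_i$ and $\rho_j$ would themselves be $G(\CC)$-conjugate, contradicting their choice as distinct representatives of $\mc{S}(G, h)$.
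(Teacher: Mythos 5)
Your plan for items (2) and (4) is essentially the paper's argument. For (2), the paper also transfers cohomological rigidity by observing that $\mc{A}_{i,\lambda,s}$ and $\mc{A}_{i,\lambda',s}$ are companions (they cite an $L$-function identity and \cite[Lemma 3.4]{esnault-groechenig:rigid} in place of your Deligne-purity/$\ell$-independence step, and get purity directly from Lafforgue rather than via Tate twists, but the mechanism is the same). For (4) your argument matches the paper's.

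There are two real gaps. First, in (1): tameness on the single transverse curve $\ol{C}_{\ol{s}}$ does not propagate to tameness on all of $X_{\ol{s}}$. Abhyankar's lemma gives you the other direction (global tameness implies tameness of the restriction, and then lets you read off the index at $y_j$ from the index at $D_j$), but wild ramification along $D_j$ can disappear upon restriction to a specific curve — consider an Artin--Schreier cover $z^p-z = f(x)/y^n$ of $\mathbb{A}^2\setminus\{y=0\}$ with $f(0)=0$, wild along $\{y=0\}$ but unramified when restricted to $\{x=0\}$. The paper instead invokes the Kerz--Schmidt criterion (\cite[Proposition 4.2]{kerz-schmidt:tameness}), which requires checking \emph{every} regular curve mapping to $X_s$; on each such curve the compatibility plus \cite[Th\'eor\`eme 9.8]{deligne:constantes} matches the inertia action with that of the tame $\lambda$-adic representation. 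Your argument for quasi-unipotent index (4) only becomes valid once global tameness of $\rho_{i,\lambda',\ol{s}}$ is established by that separate argument.

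Second, in (3): you begin with the \emph{geometric} representations $\rho_{i,\lambda',\ol{s}}$ and immediately talk about their Frobenius images at closed points of $X_s$, but geometric representations are representations of $\pi_1^{\et}(X_{\ol{s}},x_{\ol{s}})$ and carry no Frobenius conjugacy classes. Drinfeld's compatibility (Theorem \ref{drinfeld} and diagram \eqref{frobdiagram}) is a statement about the \emph{arithmetic} representations $\rho_{i,\lambda',s}$, and passing between the two is not automatic: a $G(\ol{\QQ}_{\lambda'})$-conjugacy between $\rho_{i,\lambda',\ol{s}}$ and $\rho_{j,\lambda',\ol{s}}$ need not extend to $\pi_1^{\et}(X_s,x_{\ol{s}})$. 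The paper deals with this by first showing (after enlarging $k$) that the arithmetic and geometric monodromy groups $G_{i,\lambda',s}$, $G_{i,\lambda',\ol{s}}$ agree, then arguing that a geometric conjugacy forces the arithmetic representations to differ on $F_s$ only by an element of the finite center $Z_{G_{j,\lambda'}}$, and finally enlarging $k$ again to make them equal — at which point the argument you sketch does transfer to $\lambda$. Your version jumps past this arithmetic/geometric bridge, which is precisely the delicate point for general $G$ (as opposed to $\mathrm{GL}_n$, where Frobenius semisimple conjugacy classes already determine the representation).
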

\begin{proof}

(1): By \cite[Proposition 4.2]{kerz-schmidt:tameness}, it is enough to check all restrictions of $\rho_{i,\lambda', s}$ to regular curves mapping to ${X}_s$, and with image not contained in $D$, are tamely ramified. %\spc{Not stated right--want regular curves in $X_s$.} 
Let ${Z} \to {X}_s$ be one such curve and fix a geometric base point $z \in Z$ and path from (the image of) $z$ to $x_{\bar{s}}$ in ${X}_s$. Choose a faithful representation $R \colon G_i \to \GL_n$ over $\ol{\QQ}$ and write $R_{i,\lambda,s}$ for the composition
    \[ \pi_1^{\et}(Z, z) \to \pi_1^{\et}(X_s, x_{\bar{s}}) \xrightarrow{\rho_{i,\lambda, s}} G(\ol{\QQ}_\lambda) \xrightarrow{R} \GL_n(\ol{\QQ}_\lambda), \]
and similarly for \ $R_{i,\lambda',s}$. These are, by the compatibility of $\rho_{i, \lambda, s}$ and $\rho_{i, \lambda', s}$, compatible representations.
Considering them as representations of the Galois group of the function field $k(Z)$ of $Z$, \cite[Th\'{e}or\`{e}me 9.8]{deligne:constantes} shows that the local representations of $R_{i,\lambda,s}$ and $R_{i,\lambda',s}$ at each place of $k(Z)$ have isomorphic semisimplified Weil-Deligne representations; regarding a Weil-Deligne representation as a pair consisting of a Weil group representation $r$ (smooth on the inertia subgroup) and a nilpotent matrix $N$, this semisimplification remembers the restriction of $r$ to the inertia subgroup, and in particular it detects whether or not the representation is tamely ramified. As $\rho_{i,\lambda, s}$ factors through the prime-to-$p$ quotient, it is tame, so \cite[Proposition 4.2]{kerz-schmidt:tameness} (which is a corollary of Abhyankar's lemma \cite[XIII Proposition 5.2]{sga1}) shows that $R_{i,\lambda,s}$ is tamely ramified. We conclude from \cite{deligne:constantes} that $R_{i,\lambda',s}$ is also tamely ramified. Since this holds for all such $Z \to X_s$, we conclude again by \cite[Proposition 4.2]{kerz-schmidt:tameness} that $\rho_{i, \lambda', s}$ (equivalently, $R \circ \rho_{i, \lambda', s}$) is tamely ramified.

(4): %We already know that $\rho_{i,\lambda, \ol{s}}$ factors through the prime-to-$p$ quotient of $\pi_1^\et(X_{\ol{s}}, x_{\ol{s}})$. 
As explained in Remark \ref{Spreading out the curve}, we have a smooth curve $\ol{C} \subset \ol{X}$ that is a complete intersection with smooth complement; $\ol{C}$ does not meet the singular locus of $D$ and intersects each $D_j$ transversely. We also have points $x \in \ol{X} \setminus D$ and $y_j \in D_j \cap \ol{C}$. 
%The local monodromy was defined by taking small open balls $\Delta_j \subset \ol{X}$ around $y_j$ for each $j =1,\ldots, N$, and generators $T_j$ of $\pi_1^\top(\Delta_j^\times, x_j)$ for some $x_j \in \Delta_j^\times = \Delta_j \setminus D_j$. Noticing that $\pi_1^\top(\Delta_j^\times \cap C, x_j) \cong \pi_1^\top(\Delta_j^\times, x_j)$, we see that the local monodromy around $D_j$ of a local system on $X$ is then the same as the local monodromy around $y_j$ of the local system restricted to the curve $\ol{C}$. 
Recall that $S$ is chosen as in Remark \ref{Spreading out the curve} so that $(\ol{C}, y_j)$ (along with these good properties) spreads out to $(\ol{C}_S, y_{j,S})$ over $S$, and $(\ol{C}_s, y_{j,s})$ is the fiber over $s$. As in (1), choose a faithful representation $R \colon G_i \to \GL_n$ over $\ol{\QQ}$. Write $R_{i,\lambda,s}$ for the $\lambda$-adic local system on the curve $C_s$ given by restricting $\rho_{i,\lambda, s}$ to $C_s$ and then composing with $R$ (and similarly for $R_{i,\lambda',s}$). The index of quasi-unipotence around $D_j$ of a representation of $\pi_1^{\et,t}(X_s, x_{\bar{s}})$ is equal to the index around $y_j$ of its restriction to $\pi_1^{\et,t}(C_s, x_{\bar{s}})$: indeed, locally at $y_j$ the divisor $D_j$ is the vanishing locus of a function $f_j$ that restricts (via $\mc{O}_{\ol{X}_{\bar{s}}, y_j} \to \mc{O}_{\ol{C}_{\bar{s}, y_j}}$) to a uniformizer of $\ol{C}_{\bar{s}}$ at $y_j$ (since the intersection is transverse). The claim then follows by Abhyankar's Lemma (\cite[XIII Proposition 5.2]{sga1}) upon noting that a Kummer cover ramified along $D_j$ of degree $n$ pulls back to a Kummer cover ramified along $y_j$ of degree $n$. 

As noted in \ref{quasi-unipotent monodromy}, the index of quasi-unipotence of $\rho_{i, \lambda, s}$ around $D_j$ divides $h$, so the same holds for $R_{i, \lambda, s}$ around $y_j$. We again invoke Deligne's theorem \cite[Th\'{e}or\`{e}me 9.8]{deligne:constantes} and find that $R_{i, \lambda', s}$ has the same index of quasi-unipotence around $y_j$, dividing $h$. Finally, the same observation just used implies that $\rho_{i, \lambda', s}$ has index of quasi-unipotence along $D_j$ also dividing $h$.

%We claim that it is enough to show that $R_{i,\lambda',s}$ has quasi-unipotent local monodromies with index dividing $h$. Indeed, $\rho_{i,\lambda',s}$ is tame by (1), and \cite[Lemma C.2]{drinfeld:deligneconj} shows that $r \colon \pi_1^{\et,t}(C_s, x_s) \to \pi_1^{\et,t}(X_s, x_s)$ is surjective. Let $\mc{O}_{C_s, y_{\ol{s}}}$ and $\mc{O}_{X_s, D_{i,\ol{s}}}$ be the strict Henselizations of the local rings of $C_s$ at $y_s$ and $X_s$ at $D_{i,s}$ respectively. \spq{Do I need to say more here?}\spc{a local statement would be more appropriate, since that's what the claim is about.}

%We now show that $R_{i,\lambda',s}$ has quasi-unipotent local monodromy of index dividing $h$. Since $\rho_{i,\lambda,s}$ and $\rho_{i,\lambda',s}$ are compatible, $R_{i,\lambda,s}$ and $R_{i,\lambda',s}$ are too. Invoking \cite[Theorem 9.8]{deligne:constantes} again shows that the local representations of $R_{i,\lambda,s}$ and $R_{i,\lambda',s}$ at each closed point of $\ol{C}_s$ have isomorphic semisimplified Weil-Deligne representations. Since $\rho_{i,\lambda, s}$ has quasi-unipotent local monodromies of index dividing $h$ by Remark \ref{quasi-unipotent monodromy}, the same is true for $R_{i,\lambda,s}$. Deligne's theorem then shows that the same is true for $R_{i,\lambda',s}$. 

(2): Let $\mc{A}_{i,\lambda, s}$ be the local system on $X_s$ corresponding to the representation 
    \[ \pi_1^\et(X_s, x_{\ol{s}}) \xrightarrow{\rho_{i,\lambda, s}} G(\ol{\QQ}_\lambda) \to \GL(\fg^{\mr{der}}(\ol{\QQ}_\lambda)) \]
and likewise for $\mc{A}_{i,\lambda',s}$. The determinant of each irreducible component of $\mc{A}_{i,\lambda,s}$ is finite order since $G_{i, \lambda}$ is semisimple, %$\rho_{i,\lambda,s}$ has semisimple monodromy, 
and hence $\mc{A}_{i,\lambda,s}$ is pure of weight zero by Lafforgue \cite{llafforgue:chtoucas}. The sheaves $\mc{A}_{i,\lambda,s}$ and $\mc{A}_{i,\lambda',s}$ are compatible by the remark above. 

We argue as in the proof of Theorem 1.1 of \cite{esnault-groechenig:rigid}. Compatibility gives an  equality of $L$-functions
    \[ L(X_{\ol{s}}, \mc{A}_{i,\lambda,s}) = L(X_{\ol{s}}, \mc{A}_{i,\lambda',s}) \]
and since $\mc{A}_{i,\lambda,s}$ is tame and weight $0$, \cite[Lemma 3.4]{esnault-groechenig:rigid} shows that
\[ h^1(\ol{X}_{\ol{s}}, j_{!\ast}\mc{A}_{i,\lambda',\ol{s}}) = h^1(\ol{X}_{\ol{s}}, j_{!\ast}\mc{A}_{i,\lambda, \ol{s}}). \]
The latter is $0$, as a consequence of local acyclicity and the fact that $\rho_{i,\lambda}^\top$ is cohomologically rigid. Thus $\rho_{i,\lambda',\ol{s}}$ is cohomologically rigid as required.

(3): If $\rho_{i, \lambda', s}$ were $G(\ol{\QQ}_{\lambda'})$-conjugate to $\rho_{j, \lambda', s}$, then again invoking \cite[Proposition 2.3.3]{drinfeld:pross} we find that these maps are induced by homomorphisms $\hat{\Pi}_{(\lambda')} \to G_{\ol{\QQ}}$ that are $G(\ol{\QQ})$-conjugate, and hence that $\rho_{i, \lambda, s}$ and $\rho_{j, \lambda, s}$ are $G(\ol{\QQ}_{\lambda})$-conjugate (note that $G$ is connected). 
%\spc{Use here that G is (geometrically connected)}
Moreover, the same argument works if we replace $k(s)$ by any finite extension. We claim the same for the representations $\rho_{i, \lambda', \bar{s}}$. We will first check that $\rho_{i, \lambda', s}$ and $\rho_{i, \lambda', \bar{s}}$ have the same Zariski-closure (namely $G_{i, \lambda'}$), by the corresponding property for the $\lambda$-companions. Write $G_{i, \lambda', \bar{s}}$ for the Zariski-closure of the image of $\rho_{i, \lambda', \bar{s}}$ (for any $\lambda'$, including $\lambda$). Since the commutator subgroup of $G_{i, \lambda'}$ is contained in $G_{i, \lambda', \bar{s}}$, and $G_{i, \lambda'}^0$ is semisimple, $G_{i, \lambda'}^{0, \mr{der}}=G_{i, \lambda'}^0$ is equal to $G_{i, \lambda', \bar{s}}^0$. Thus $G_{i, \lambda'}/G_{i, \lambda', \bar{s}}$ is isomorphic to the quotient of component groups $\pi_0(G_{i, \lambda'})/\pi_0(G_{i, \lambda', \bar{s}})$. If this were non-trivial, it would be generated by the image of the geometric Frobenius $F_s \in \pi_1^\et(s, \bar{s}) \xrightarrow{x_s} \pi_1^\et(X_s, x_{\bar{s}})$, and, for some integer $n$, $F_s^n$ would have image in $\pi_0(G_{i, \lambda', \bar{s}})$. Thus after replacing $k(s)$ by a finite extension, we may assume $G_{i, \lambda', \bar{s}}=G_{i, \lambda'}$, as desired.
%\footnote{This argument has the awkwardness of not being independent-of-$\lambda'$. That doesn't matter for our conclusion, since we only need to extract a conclusion about the geometric representations. (Moreover, later on we apply these arguments only to finitely many $\lambda'$ in total. But at least for curves, and presumably in general, one can say much more by combining results of Serre, Larsen-Pink, and Chin (\cite[\S 2]{larsen-pink:AV}, \cite[Theorem 1.4-1.6]{chin:indl}), which taken together imply that the arithmetic and geometric monodromy groups of the compatible system have the property that both their identity components and their groups of connected components are independent of $\lambda'$. Can you use Bertini theorems for the general case?}

Now assume that $\rho_{i, \lambda', \bar{s}}$ and $\rho_{j, \lambda', \bar{s}}$ are conjugate by an element of $G(\ol{\QQ}_{\lambda'})$. Replacing $\rho_{i, \lambda', \bar{s}}$ by such a conjugate, we may assume the two homomorphisms $\rho_{i, \lambda', s}$ and $\rho_{j, \lambda', s}$ are literally equal on $\pi_1^{\et}(X_{\bar{s}}, x_{\bar{s}})$. Let $F_s \in \pi_1^{\et}(X_s, x_{\bar{s}})$ be the image of the generating geometric Frobenius element of $\pi_1^\et(s, \bar{s})$. Then for all $\gamma \in \pi_1^{\et}(X_{\bar{s}}, x_{\bar{s}})$, $\rho_{i, \lambda', \bar{s}}(F_s \gamma F_s^{-1})=\rho_{j, \lambda', \bar{s}}(F_s \gamma F_s^{-1})$, hence $\rho_{j, \lambda', s}(F_s)^{-1} \rho_{i, \lambda', s}(F_s)$ belongs to the center $Z_{G_{j, \lambda'}}$ (we have used the above assertion about equality of arithmetic and geometric monodromy groups). As this center is finite, replacing $k(s)$ by a finite extension we have $\rho_{i, \lambda', s}=\rho_{j, \lambda', s}$, a contradiction: indeed, then their $\lambda$-companions, and consequently $\rho_{i, \lambda, \bar{s}}$ and $\rho_{j, \lambda, \bar{s}}$, would then be equivalent. 
\end{proof}

\subsection{Deduction of the main theorem}
\begin{proof}[Proof of Theorem \ref{mainthm}]
Now, since the companions $\rho_{i, \lambda', \bar{s}}$ are tamely ramified, we can pull them back along the tame specialization map (\cite[Corollary A.12]{lieblich-olsson-pi1}) $\pi_1^{\et, t}(X, x) \twoheadrightarrow \pi_1^{\et, t}(X_{\bar{s}}, x_{\bar{s}})$ to obtain complex local systems
\[
\rho^{\top}_{i, \lambda'} \colon \pi_1^{\top}(X, x) \to G_{i, \lambda'}(\ol{\QQ}_{\lambda'}) \subset G(\ol{\QQ}_{\lambda'})
\]
with image in fact contained in $G(\ol{\ZZ}_{\lambda'})$ (because they are constructed from \'etale local systems). The Zariski closure of $\mr{im}(\rho_{i,\lambda'}^\top)$ is $G_{i,\lambda'}$ since this is true for $\mr{im}(\rho_{i,\lambda',\ol{s}})$ and the tame specilization map is surjective. Thus $\rho_{i,\lambda'}^\top$ has (not necessarily connected) semisimple monodromy. By the Betti-\'etale comparison isomorphism, the $\rho^{\top}_{i, \lambda'}$ are still cohomologically rigid (now as local systems on $X$), and they are still inequivalent and $G$-irreducible with quasi-unipotent local monodromy of index dividing $h$. If we apply a field isomorphism $\iota \colon \CC \xrightarrow[/\ol{\QQ}]{\sim} \ol{\QQ}_{\lambda'}$ to the elements of the finite set $\mc{S}(G, h)$, we obtain a set of $\mathfrak{N}$ distinct $G(\ol{\QQ}_{\lambda'})$-local systems on $X$ that are $G$-irreducible, $G$-cohomologically rigid, and with monodromy at infinity quasi-unipotent of index dividing $h$, and these are by construction a complete set of such local systems. Our $\rho^\top_{i, \lambda'}$, for $i=1, \ldots, \mathfrak{N}$, are thus up to $G(\ol{\QQ}_{\lambda'})$-isomorphism a full collection of representatives of this set $\iota \mc{S}(G, h)$. Returning to our original $\rho \colon \pi_1^\top(X, x) \to G(\mc{O}_{K, \Sigma})$, we see that for any $\lambda'$ above a place in $\Sigma$, the composite 
\[
\pi_1^\top(X, x) \xrightarrow{\rho} G(\mc{O}_{K, \Sigma}) \subset G(\ol{\QQ}_{\lambda'}) 
\]
is also a member of $\iota \mc{S}(G, h)$. We conclude that $\rho$ can be conjugated by an element of $G(\ol{\QQ}_{\lambda'})$ into $G(\ol{\ZZ}_{\lambda'})$, since this holds for each $\rho_{i, \lambda'}^\top$. Combining this argument for all places in $\Sigma$ with our integrality criterion, Lemma \ref{integralityprop}, we deduce that for some finite extension $L/K$, $\rho$ is $G(L)$-conjugate to a homomorphism $\pi_1^{\top}(X, x) \to G(\mc{O}_L)$, concluding the proof.
\end{proof}

\bibliographystyle{amsalpha}
\bibliography{biblio.bib}
\end{document}